\def\R{\mathbb R}
\def\G{\mathbb G}
\def\N{\mathbb N}
\def\H{\mathbb H}
\def\Z{\mathbb Z}
\def\MCP{{\rm MCP}}
\def\TIL{{\rm TIL}}
\def\cut{{\rm cut}}
\def\vol{{\rm vol}}
\def\Ric{{\rm Ric}}
\def\ad{{\rm ad}}
\numberwithin{equation}{section}
\newtheorem{theorem}{Theorem}
\newtheorem{lemma}[theorem]{Lemma}
\newtheorem{proposition}[theorem]{Proposition}
\newtheorem{definition}[theorem]{Definition\rm}
\begin{document}

\title{Ricci curvatures in Carnot groups}


\author{L.~Rifford\thanks{Universit\'e de Nice-Sophia
    Antipolis, Labo.\ J.-A.\ Dieudonn\'e, UMR CNRS 6621, Parc
    Valrose, 06108 Nice Cedex 02, France ({\tt
      Ludovic.Rifford@math.cnrs.fr})}}




\maketitle

\begin{abstract}
We study metric contraction properties for metric spaces associated with left-invariant sub-Riemannian metrics on Carnot groups. We show that ideal sub-Riemannian structures on Carnot groups satisfy such properties and give a lower bound of possible curvature exponents in terms of the datas.
\end{abstract}


\section{Introduction}\label{SECintroduction}
\label{Intro}
Several notions of Ricci curvature (bounded from below) on measured metric spaces have been defined recently \cite{lv07,ohta07,sturm06a,sturm06b}. The Ohta measure contraction property \cite{ohta07} reflects the way the volume of balls is distorted along geodesics. In \cite{juillet09}, Juillet proved that Heisenberg groups equipped with their canonical sub-Riemannian metric and the Haar measure satisfy some measure contraction properties. The aim of the present paper is to extend Juillet's results to more general Carnot groups. In order to introduce Ohta's measure contraction property, we first study the Riemannian case.  We refer the reader to the textbook \cite{ghl04} for further details in Riemannian geometry.\\

Let $(M,g)$ be a  complete smooth Riemannian manifold of dimension $n$, we denote by $d_g$ the geodesic distance on $M$, by $d\vol_g$ the canonical measure of $(M,g)$ and by $\exp_x:T_xM \rightarrow M$ the exponential map from a point $x\in M$ . Given $x\in M$, the cut locus of $x$, denoted by $\cut(x)$, is the smallest closed set in $M$ such that the pointed distance $y \in M\mapsto d_g(x,y)$ is smooth on its complement in $M$. That set can also be viewed as the closure of the set of points $y\in M$ such that at least two distinct minimizing geodesics join $x$ to $y$. For every $y\in M\setminus \cut(x)$, there is a unique minimizing geodesic joining $x$ to $y$; we denote by $v_y\in T_xM$ the tangent vector such that $\exp_x(v_y)=y$ and by $\gamma_{x,v_y} :[0,1] \rightarrow M$ the geodesic starting at $x$ with velocity $v_y$. The tangent injectivity locus at $x$ is the subset of $T_xM$ defined as 
$$
\TIL(x) := \Bigl\{ v_y \, \vert \, y \in M \setminus \cut(x)\Bigr\};
$$
it is an open subset of $T_xM$ which is star-shaped with respect to the origin and has a locally Lipschitz boundary \cite{cr10,it01,ln05}. Moreover, the mapping 
$$
\exp_x \, : \, \TIL(x) \, \longrightarrow \, M\setminus \cut(x)
$$
is a smooth diffeomorphism. The set $\cut(x)$ is the image of $\partial \TIL(x)$ through $\exp_x$, hence it has measure zero. Let $x\in M$ and $A\subset M$ a measurable set with $0<\vol_g(A) < \infty$ be fixed. For every $s\in [0,1]$, we call $s$-interpolation of $A$ from $x$, the subset of $M$ defined by
$$
A_s := \Bigl\{ \gamma_{x,v}(s) \, \vert \, v \in \TIL(x), \, \exp_x(v) \in A\Bigr\}.
$$
\begin{pdfpic}
\begin{pspicture}(10,3)
\psdots[dotsize=5pt](2,1)
\rput(2,0.75){$x$}
\psccurve[linewidth=1pt](3.5,0.8)(4,1.05)(4.2,1.8)(3.5,1.7)
\psccurve[linewidth=1pt,fillstyle=solid,fillcolor=lightgray](7,1)(7.5,1.3)(8,3)(7,2.9)
\pscurve[linewidth=1pt,linestyle=dashed](2,1)(4.2,2.08)(7.5,3.17)
\pscurve[linewidth=1pt,linestyle=dashed](2,1)(5,0.65)(7.25,1)
\rput(7.75,1.25){$A$}
\rput(4.3,1){$A_s$}
\end{pspicture}
\end{pdfpic}
\begin{center}
Figure 1
\end{center}

Note that $A_0=\{x\}$ while $A_1=A\setminus \cut(x)$. Denote by $U_xM$ the unit sphere in $(T_xM,g_x)$, and for every $s\in [0,1]$ by $D_s$ the subset of $T_xM$ corresponding to $A_s$ through the exponential mapping in polar coordinates, that is 
$$
D_s:= \Bigl\{ (t,u)  \in [0,+\infty) \times U_xM \, \vert  \,  \exp_x(tu) \in A_s \Bigr\}.
$$
Let $u\in U_xM$ and $\left( e_1, \ldots, e_n\right)$ be an orthonormal basis of $T_xM$ with $e_1=u$. For every $t\geq 0$ let $\gamma_{x,u}(t)= \exp_x(tu)$, and let $\left( e_1(t),\ldots, e_n(t)\right)$ be an orthonormal basis of $T_{\gamma_{x,u}(t)}M$ obtained by parallel transport of $\left( e_1, \ldots, e_n\right)$ along $\gamma_{x,u}$. Let further, for $t\geq 0$,
\begin{eqnarray*}
R_{ij}(t,u) = \Bigl\langle \mbox{Riem}_{\gamma_{x,u}(t)} \bigl(\dot{\gamma}_{x,u}(t), e_i(t)\bigr) \dot{\gamma}_{x,u}(t), e_j(t)\Bigr\rangle_{\gamma_{x,u}(t)},\qquad 1\leq i,j\leq n,
\end{eqnarray*}
where $\mbox{Riem}$ stands for the Riemann curvature tensor. Note that the $n\times n$ matrix $R(t,u) = \left( R_{ij}(t,u)\right) $ is symmetric and satisfies $R_{i1}(t,u)=R_{1i}(t,u)=0$ for any $i$. We define $J(t,u)$, implicitly depending on $x$ and $\left(e_1,\ldots, e_n\right)$, as the matrix-valued solution of
\begin{eqnarray*}
\begin{cases}
\ddot{J}(t,u) + R(t,u) J(t,u) = 0, \\[1mm]
J(0,u)=0_n,\quad \dot{J}(0,u) = I_n.
\end{cases}
\end{eqnarray*}
The first line and row of $J$ always satisfy $J_{i1}(t,u)=J_{1i}(t,u)= t\delta_{i1}$ for any $t\geq 0$ and any $i$. Then the $(n-1)\times (n-1)$ block in the lower right of $J$, that we denote by $\hat{J}$, is solution to 
\begin{eqnarray}\label{jacobi2}
\begin{cases}
\ddot{\hat{J}}_i(t,u) + \hat{R}(t,u) \hat{J}_i(t,u) = 0, \\[1mm]
\hat{J}(0,u)=0_{n-1},\quad \dot{\hat{J}}(0,u) = I_{n-1},
\end{cases}
\end{eqnarray}
where $\hat{R}(t,u)$ denotes the $(n-1)\times (n-1)$ block in the lower right of $R(t,u)$. Note that $\hat{J}(t,u)$ may depend on the orthonormal basis $\left( e_1, \ldots, e_n\right)$ of $T_xM$, however its determinant does not. Then we set 
\begin{eqnarray}\label{DEFD}
D(t,u) := \det \bigl(\hat{J}(t,u)\bigr) \qquad \forall t \geq 0, \, \forall u \in U_xM,
\end{eqnarray}
which depends implicitely on $x$. Since $t^{-(n-1)} D(t,u)$ corresponds to the Jacobian determinant of the mapping $(t,u) \mapsto \exp_x(tu)$, we get by change of variable
\begin{eqnarray*}
\vol_g (A) := \int_{A} 1 \, d\vol_g(z)  = \int_{A_1} 1 \, d\vol_g(z)  =  \int_{D_1} D(t,u)  \, dt \, du,
\end{eqnarray*}
and for every $s\in [0,1]$,
\begin{eqnarray}\label{As}
\vol_g(A_s)  & = & \int_{A_s} 1 \, d\vol_g(z) = \int_{D_s} D(t,u) \, dt \, du = \int_{D_1} s D(st,u)  \, dt \, du.
\end{eqnarray}
In the above change of variable, we used that for every $u\in U_xM$, the determinant of the Jacobi matrix $J(t,u)$ (or equivalently of $\hat{J}(t,u)$) is positive as long as the tangent vector $tu$ (with $t>0$) belongs to $\TIL(x)$. For every $u \in U_xM$, we denote by $t_{\cut}(u)$ the first time $t\geq 0$ such that $tu\notin \TIL(x)$ (if $tu$ always belongs to $\TIL(x)$ we set $t_{\cut}(u)=+\infty$). Given $u\in U_xM$, remembering (\ref{jacobi2}) and that $R$ (and a fortiori $\hat{R}$) is symmetric, we check easily that  the $(n-1)\times (n-1)$ matrix 
$$
U(t,u):= \dot{\hat{J}}(t,u) \hat{J}(t,u)^{-1} \qquad \forall t \in \bigl(0,t_{\cut}(u)\bigr)
$$
satisfies the Ricatti equation
\begin{eqnarray}\label{ricatti}
\dot{U} (t,u) + U(t,u)^2 + \hat{R}(t,u) = 0 \qquad \forall t \in \bigl(0,t_{\cut}(u)\bigr),
\end{eqnarray}
and is symmetric. Since the derivative of a determinant is a trace, we have 
$$
 \frac{\dot{D}(t,u)}{D(t,u)} = \mbox{tr} \bigl( U(t,u)\bigr)  \qquad \forall t \in \bigl(0,t_{\cut}(u)\bigr).
$$
Moreover, the Cauchy-Schwarz inequality yields 
$$
\left( \mbox{tr} \bigl(U(t,u)\bigr) \right)^2 \leq \mbox{tr} \left( U(t,u)^2 \right) (n-1).
$$  
Then taking the trace of (\ref{ricatti}), we get 
$$
\mbox{tr} \left( \dot{U} (t,u)\right) + \frac{1}{n-1} \left[ \mbox{tr} \left( U(t,u)\right) \right]^2 + \mbox{tr} \bigl( \hat{R}(t,u)\bigr) \leq 0 \qquad \forall t \in \bigl(0,t_{\cut}(u)\bigr).
$$
Recall that we have by definition of the Ricci curvature  as a quadratic form
$$
\Ric_g \left(\dot{\gamma}_{x,u} (t)\right) =  \mbox{tr} \left( \hat{R}(t,u)\right)  \qquad \forall t \in \bigl(0,t_{\cut}(u)\bigr).
$$
Therefore if a Riemannian manifold satisfies $\Ric_g \geq K$, we have 
$$
\frac{d}{dt} \left( \frac{\dot{D}(t,u)}{D(t,u)}\right) + \frac{1}{n-1} \, \left( \frac{\dot{D}(t,u)}{D(t,u)}\right)^2 +K \leq 0  \qquad \forall t \in \bigl(0,t_{\cut}(u)\bigr).
$$
By classical comparison theorems, we infer that
\begin{eqnarray}\label{comparison}
 \frac{\dot{D}(t,u)}{D(t,u)} \leq \sqrt{n-1}\, \frac{\dot{s}_K\left(t/\sqrt{n-1}\right)}{s_K\left(t/\sqrt{n-1}\right)}  \qquad \forall t \in \bigl(0,t_{\cut}(u)\bigr),
\end{eqnarray}
where the function $s_K: [0,+\infty) \rightarrow [0,+\infty)$ ($s_K: [0,\pi/\sqrt{K})\rightarrow [0,+\infty)$ if $K>0$) is defined by
$$
s_K(t) := \left\{ \begin{array}{lll}
\frac{\sin(\sqrt{K}t)}{\sqrt{K}} & \mbox{ if } K>0\\
t & \mbox{ if } K=0 \\
\frac{\sinh(\sqrt{-K}t)}{\sqrt{-K}} & \mbox{ if } K<0.
\end{array}
\right.
$$
Then, for every $s\in (0,1)$ integrating the inequality on $[st,t]$ yields (note that by Bonnet-Myers's Theorem, $K>0$ implies $t_{\cut}(u) < \pi\sqrt{n-1/K}$ for all $u$)
$$
\frac{D(t,u)}{D(st,u)} \leq \left[  \frac{s_K\left(t/\sqrt{n-1}\right)}{s_K\left(st/\sqrt{n-1}\right)} \right]^{n-1}  \qquad \forall t \in \bigl(0,t_{\cut}(u)\bigr).
$$
Then remembering (\ref{As}), we get for every $s\in (0,1)$,
\begin{eqnarray*}
\vol_g \left( A_s\right) & = &  \int_{D_1} sD(st,u) \, dt \, du\\
& \geq & s \int_{D_1} \left[  \frac{s_K\left(st/\sqrt{n-1}\right)}{s_K\left(t/\sqrt{n-1}\right)} \right]^{n-1}  D(t,u) \, dt \, du\\
& = &  \int_{A_1} s \left[  \frac{s_K\left(sd_g(x,z)/\sqrt{n-1}\right)}{s_K\left(d_g(x,z)/\sqrt{n-1}\right)} \right]^{n-1}  \, d\vol_g(z) \\
& = &  \int_{A} s \left[  \frac{s_K\left(sd_g(x,z)/\sqrt{n-1}\right)}{s_K\left(d_g(x,z)/\sqrt{n-1}\right)} \right]^{n-1}  \, d\vol_g(z).
\end{eqnarray*}
Note that the above inequality holds for $s=0, 1$. Then summarizing we have the following result.

\begin{proposition}\label{PROPriem}
Let $(M,g)$ be a complete smooth Riemannian manifold satisfying $\Ric_g \geq K$. Then for every $x\in M$ and every measurable set $A \subset M$ with $0<\vol_g(A) < \infty$, we have 
\begin{eqnarray}\label{PROP1eq}
\vol_g \left( A_s\right) \geq  \int_{A} s \left[  \frac{s_K\left(sd_g(x,z)/\sqrt{n-1}\right)}{s_K\left(d_g(x,z)/\sqrt{n-1}\right)} \right]^{n-1}  \, d\vol_g(z) \qquad \forall s \in [0,1].
\end{eqnarray}
\end{proposition}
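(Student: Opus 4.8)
The plan is to reduce the global volume inequality to a one-dimensional comparison along each geodesic ray emanating from $x$, exploiting the polar-coordinate change of variables already recorded in (\ref{As}). Since $\cut(x)$ has measure zero, the integrals over $A$ and over $A_1 = A\setminus\cut(x)$ coincide, so it suffices to establish, for each fixed direction $u\in U_xM$ and each $t\in\bigl(0,t_{\cut}(u)\bigr)$, the pointwise Jacobian ratio bound
\begin{equation*}
\frac{D(t,u)}{D(st,u)} \le \left[\frac{s_K\bigl(t/\sqrt{n-1}\bigr)}{s_K\bigl(st/\sqrt{n-1}\bigr)}\right]^{n-1}, \qquad s\in(0,1),
\end{equation*}
and then to substitute it into (\ref{As}) and change variables back through $\exp_x$. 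All the geometric bookkeeping (the positivity of $D$ inside $\TIL(x)$, the smooth diffeomorphism property of $\exp_x$) has already been set up, so the work concentrates on the one-dimensional estimate.

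The analytic core is the scalar Riccati differential inequality satisfied by the logarithmic derivative $\dot D/D$. Starting from the matrix Jacobi equation (\ref{jacobi2}), I would introduce $U(t,u) = \dot{\hat J}(t,u)\,\hat J(t,u)^{-1}$, which solves the symmetric matrix Riccati equation (\ref{ricatti}), and use the identity $\dot D/D = \tr\bigl(U(t,u)\bigr)$. Taking the trace of (\ref{ricatti}), applying Cauchy-Schwarz in the form $\bigl(\tr U\bigr)^2 \le (n-1)\,\tr\bigl(U^2\bigr)$, and recognizing $\tr\bigl(\hat R(t,u)\bigr) = \Ric_g\bigl(\dot{\gamma}_{x,u}(t)\bigr)\ge K$, one obtains
\begin{equation*}
\frac{d}{dt}\left(\frac{\dot D}{D}\right) + \frac{1}{n-1}\left(\frac{\dot D}{D}\right)^2 + K \le 0 .
\end{equation*}
The standard one-dimensional comparison theorem for this Riccati-type inequality then yields the bound (\ref{comparison}) on $\dot D/D$ in terms of $s_K$.

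Finally I would integrate (\ref{comparison}) over the interval $[st,t]$ to pass from the logarithmic-derivative estimate to the ratio estimate displayed above, insert it into $\vol_g(A_s) = \int_{D_1} s\,D(st,u)\,dt\,du$, and rewrite the resulting integral over $D_1$ as an integral over $A$ via $z = \exp_x(tu)$ together with $d_g(x,z)=t$; the endpoint cases $s=0,1$ follow by direct inspection. The step requiring the most care is the regime $K>0$: there one must invoke the Bonnet-Myers theorem to guarantee $t_{\cut}(u) < \pi\sqrt{(n-1)/K}$, so that the argument of $s_K$ stays in $[0,\pi/\sqrt{K})$ where $s_K>0$ and both the comparison and the integration remain valid. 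One also relies throughout on $D(t,u)>0$ on $\bigl(0,t_{\cut}(u)\bigr)$, i.e.\ on the nondegeneracy of the Jacobian of $\exp_x$ before the cut time, which is precisely what legitimizes the change of variables in (\ref{As}).
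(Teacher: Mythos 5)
Your proposal is correct and follows essentially the same route as the paper's own argument: the polar-coordinate change of variables (\ref{As}), the matrix Riccati equation (\ref{ricatti}) for $U=\dot{\hat J}\hat J^{-1}$, the trace plus Cauchy-Schwarz reduction to the scalar Riccati inequality, the comparison bound (\ref{comparison}), integration over $[st,t]$, and the Bonnet-Myers remark in the case $K>0$. There is nothing to add.
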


According to the above result, Ohta introduced in \cite{ohta07} the notion of measure contraction property of general measured metric spaces that we proceed to define. For sake of simplicity we restrict our attention to measured metric spaces with negligeable cut loci.

\begin{definition}
Let $(X,d,\mu)$ be a measured metric space. We say that it is a geodesic space with negligeable cut loci if for every $x\in X$, there is a measurable set $\mathcal{C}(x) \subset X$ with  
$$
\mu \left( \mathcal{C}(x)\right) =0,
$$
and a measurable map $\mathcal{E}_{x} \, : \, \left( X \setminus \mathcal{C}(x)\right) \times [0,1] \longrightarrow X$ such that for every $y \in  X \setminus \mathcal{C}(x)$ the curve 
$$
s \in [0,1] \longmapsto \mathcal{E}_x (y,s)
$$
is the unique geodesic from $x$ to $y$. 
\end{definition}

The following definition is equivalent to Ohta's definition \cite[Definition 2.1]{ohta07} in the case of geodesic measured metric spaces with negligeable cut loci.

\begin{definition}
Let $(X,d,\mu)$ be a measured metric space which is geodesic with negligeable cut loci and $K\in \R, N> 1$ be fixed. We say that $(X,d,\mu)$ satisfies $\MCP(K,N)$ if for every $x\in X$ and every measurable set $A\subset X$ (provided that $A\subset B_d(x,\pi\sqrt{N-1/K})$ if $K>0$) with $0< \mu(A)<\infty$,
$$
\mu \left(A_s\right)  \geq  \int_{A} s \left[  \frac{s_K\left(sd(x,z)/\sqrt{N-1}\right)}{s_K\left(d(x,z)/\sqrt{N-1}\right)} \right]^{N-1}  \, d\mu(z).
$$
where $A_s$ is the $s$-interpolation of $A$ from $x$ defined by
$$
A_s := \Bigl\{ \mathcal{E}_x(y,s) \, \vert \, y \in A \setminus \mathcal{C}(x) \Bigr\} \qquad \forall s \in [0,1].
$$
In particular, $(X,d,\mu)$ satisfies $\MCP(0,N)$ if for every $x\in X$ and every measurable set $A$ with $0< \mu(A)<\infty$,
$$
\mu \left(A_s\right) \geq s^N \mu (A) \qquad \forall s \in [0,1],
$$
\end{definition}

Of course, Euclidean spaces, that is $\R^n$ equipped with a constant Riemannian metric, satisfy $\MCP(0,n)$. Carnot groups are to sub-Riemannian geometry as Euclidean spaces are to Riemannian geometry. They are the metric tangent cones for this geometry. Elaborating on an idea of Gromov \cite{gromov99}, Mitchell \cite{mitchell85} proved that any sub-Riemannian structure does admit at generic points metric tangent cones which are Carnot groups equipped with  left-invariant metrics. This property makes them prime canditates to satisfy $\MCP(0,N)$. In \cite{juillet09}, Juillet  proved that the Heisenberg group $\H_n$ equipped with its canonical sub-Riemannian metric and the Haar measure satisfies $\MCP(0,N)$ with $N=2n+3$. Heisenberg groups are the most simple examples of sub-Riemannian structures. The aim of the present paper is to show the validity of Juillet's result for more general Carnot groups equipped with left-invariant sub-Riemannian structures and Haar measures. Let us now present briefly our results. We refer the reader to Section \ref{SECprelim} for reminders in sub-Riemannian geometry and Carnot groups.

\begin{theorem}\label{THM1}
Let $\G$ be a Carnot group whose first layer is equipped with a left-invariant metric, assume that it is ideal. Then  there is $N>0$ such that the metric space $(\G,d_{SR})$ with Haar measure satisfies $\MCP(0,N)$. 
\end{theorem}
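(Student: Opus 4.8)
The plan is to mimic the Riemannian computation from the introduction, but now in the sub-Riemannian setting where the role of the exponential map is played by the \emph{sub-Riemannian exponential} associated with normal geodesics. The key structural facts I would invoke (to be established in the preliminary sections) are: first, that an \emph{ideal} Carnot group has no nontrivial abnormal minimizers, so that every minimizing geodesic is normal and is parametrized by a covector via the exponential map $\exp_x$; second, that the metric space $(\G,d_{SR})$ is geodesic with negligeable cut loci, so that the Ohta interpolation $A_s$ and the volume computation \eqref{As} make sense after a change of variables. With left-invariance, I would reduce everything to the origin $e$: the Haar measure is left-invariant, $d_{SR}$ is left-invariant, and the exponential map at an arbitrary point is a left-translate of $\exp_e$, so it suffices to establish the contraction inequality for $x=e$ and then translate.

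The heart of the argument is to produce a sub-Riemannian analogue of the Jacobian $D(t,u)$ and to understand its scaling behaviour. Here I would exploit the dilation structure that is special to Carnot groups. A Carnot group carries a one-parameter family of anisotropic dilations $\delta_\lambda$ that scale the $i$-th layer by $\lambda^i$; these dilations scale $d_{SR}$ by $\lambda$ and the Haar measure by $\lambda^{Q}$, where $Q=\sum_i i\,\dim(\mathfrak g_i)$ is the homogeneous dimension. The crucial point is that, because of left-invariance together with homogeneity, the $s$-interpolation and the Jacobian of the exponential map factor through the dilations in a controlled way. Concretely, I expect that the Jacobian determinant $D(t,u)$ of $\exp_e$ in polar-type coordinates is a homogeneous function whose growth in $t$ is governed by a fixed exponent determined by the Lie-algebra data; this is exactly what one needs to reproduce, in the model case $K=0$, the comparison $D(t,u)/D(st,u)\ge $ (a power of $s$), and hence the conclusion $\mu(A_s)\ge s^N\mu(A)$ with $N$ an explicit exponent depending on $Q$ and the step.

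The main obstacle, which has no Riemannian counterpart, is that the sub-Riemannian exponential map is genuinely singular: its differential degenerates along the vertical/abnormal directions, so the naive Jacobi-field ODE \eqref{jacobi2} does not directly apply, and the Jacobian can vanish to high order along the cut locus and at the conjugate locus. I would therefore spend the bulk of the work analyzing the structure of $\exp_e$ and of $\mathrm{TIL}(e)$: showing that the cut locus is closed of measure zero (using the ideal assumption to rule out abnormal cut points), and obtaining a uniform lower bound on the scaling ratio of the Jacobian along each geodesic. The ideal hypothesis is what makes the exponential map a local diffeomorphism away from a negligeable set and lets me parametrize geodesics smoothly by covectors; without it, abnormal minimizers would break the change-of-variables formula entirely.

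Finally, I would assemble these ingredients exactly as in the Riemannian proof: fix $A$ with $0<\mu(A)<\infty$, pull back to the cotangent space via $\exp_e$, write $\mu(A_s)=\int s\,\widetilde D(s,t,u)\,\dots$ using the interpolation formula, and bound the integrand below by $s^N$ times the density at the endpoint, using the homogeneity estimate on the Jacobian in place of \eqref{comparison}. Integrating back and using left-invariance to transport to arbitrary $x$ then yields $\MCP(0,N)$ with the exponent $N$ read off from the homogeneous dimension and the step of $\G$, completing the proof of Theorem~\ref{THM1}.
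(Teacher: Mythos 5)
There is a genuine gap, and it sits exactly at the step you yourself flag as the heart of the matter. You propose to get the contraction from a claimed homogeneity of the Jacobian of $\exp_e$: that $D(t,u)$ grows like a fixed power of $t$ along each geodesic, so that $D(st,u)\geq s^{N-1}D(t,u)$ follows from the dilation structure. This is not only unproved in your sketch, it is false. The dilations $\delta_\lambda$ do relate geodesics to geodesics and do commute with the interpolation, $\delta_\lambda\bigl(A_s\bigr)=\bigl(\delta_\lambda A\bigr)_s$, but they relate \emph{different} geodesics (the covector gets rescaled anisotropically, layer by layer); they say nothing about the behaviour of the Jacobian along a \emph{fixed} geodesic, because the interpolation map $y\mapsto \gamma_y(s)$ is not the dilation $\delta_s$ except in the abelian case. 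Concretely: if $D(t,u)$ were homogeneous of degree $Q-1$ in $t$, the change of variables would give $\mu(A_s)=s^{Q}\mu(A)$ and hence $\MCP(0,Q)$ with $Q=D$ the homogeneous dimension; but Theorem \ref{THM2} of the paper shows the curvature exponent is at least $D+n-m>D$ for any non-abelian Carnot group, so $\MCP(0,D)$ fails and the Jacobian cannot be homogeneous. The Heisenberg group already illustrates this: there the Jacobian involves transcendental expressions like $\sin(p_z/2)/(p_z/2)$, the optimal exponent is $5$ rather than the homogeneous dimension $4$, and Juillet's argument rests on monotonicity of specific functions, not on scaling. For a general ideal Carnot group no explicit formula is available, and you give no mechanism for the ``uniform lower bound on the scaling ratio of the Jacobian along each geodesic'' beyond the homogeneity that fails.

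For contrast, the paper's proof bypasses the exponential map entirely at the crucial step, and this is where the ideal hypothesis really earns its keep: beyond making geodesics normal and cut loci negligeable, it makes the pointed distance $f=d_{SR}(0,\cdot)$ \emph{locally semiconcave} outside the origin (Proposition \ref{PROPideal}). On the annulus $B_{SR}(0,1)\setminus B_{SR}(0,1/2)$ this gives a uniform upper Hessian bound $\mbox{Hess}_x f\leq K I_n$; the interpolation is realized as the flow of the synthesis vector field $Z=-\mathcal{P}_{\Delta}(df)$ via $A_s=\phi^Z_{1-s}(A_1)$, the Hessian bound yields a lower bound on $\mbox{div}\, Z$, and Gronwall's lemma gives $\mathcal{L}^n(A_s)\geq e^{C(s-1)}\mathcal{L}^n(A_1)\geq s^N \mathcal{L}^n(A_1)$ for $s\in[1/2,1]$ (Lemma \ref{LEM1}). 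Only \emph{then} do the dilations enter, in the way you correctly anticipated but for a more modest purpose: to transport this single-annulus, single-range estimate to all dyadic annuli and all $s\in[0,1]$ (Lemma \ref{LEM2}), after which a dyadic decomposition of an arbitrary set $A$ concludes. Note also that the resulting $N$ is not an explicit function of $Q$ and the step, as your sketch promises; it comes from the semiconcavity constant, which is qualitative. If you want to repair your approach, the fix is to replace the homogeneity claim by the semiconcavity-plus-divergence argument on a fixed annulus, keeping your dilation bookkeeping as the scaling step.
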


We call curvature exponent of a Carnot group $\G$ whose first layer is equipped with a left-invariant metric the least $N\geq 1$ such that $\MCP(0,N)$ is satisfied. The curvature exponent is $\infty$ if $\MCP(0,N)$ is never satisfied for $N>1$. Note that if $N$ is finite, then $\G$ (equipped with its sub-Riemannian structure) does satisfy $\MCP(0,N)$. 

\begin{theorem}\label{THM2}
Let $\G$ be a Carnot group (equipped with a sub-Riemannian structure and the Haar measure), assume that  it is a geodesic space with negligeable cut loci. Then its curvature exponent $N$ satisfies
$$
N \geq D + n -  m,
$$
where $n$ is the topological dimension of $\G$, $D$ its homogeneous dimension and $m$ is the dimension of the first layer.
\end{theorem}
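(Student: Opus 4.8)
The plan is to prove the bound by exhibiting, for every $N'<D+n-m$, a single set on which $\MCP(0,N')$ fails. By left-invariance of $d_{SR}$ and of the Haar measure $\mu$ I may center everything at the identity $e$, and write $\mathcal{E}_e(y,s)$ for the value at time $s$ of the unique geodesic $\gamma_y$ from $e$ to $y$ (available for $\mu$-a.e.\ $y$ since the cut locus is negligeable). I fix the ball $A=B_{d_{SR}}(e,1)$, so that $0<\mu(A)<\infty$, put $\Phi_s(y):=\mathcal{E}_e(y,s)$ and $A_s=\Phi_s(A\setminus\mathcal{C}(e))$. The whole argument rests on comparing $\Phi_s$ with the dilation $\delta_s$ of $\G$, which scales $d_{SR}$ by $s$ and $\mu$ by $s^{D}$.

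First I would record two structural facts. Since $\delta_s$ carries geodesics through $e$ to geodesics through $e$ and contracts lengths by $s$, while $d_{SR}(e,\gamma_y(s))=s\,d_{SR}(e,y)$, the renormalised contraction $\Psi_s:=\delta_{1/s}\circ\Phi_s$ satisfies $d_{SR}(e,\Psi_s(y))=d_{SR}(e,y)$; in particular $\Psi_s(A)\subseteq B_{d_{SR}}(e,1)$. Moreover $\Psi_s(A)=\delta_{1/s}(A_s)$ as sets, whence $\mu(A_s)=s^{D}\,\mu(\Psi_s(A))$ exactly (no injectivity of $\Phi_s$ is needed). Thus everything reduces to the single estimate $\mu(\Psi_s(A))\le C\,s^{\,n-m}$: from it $\mu(A_s)\le C\,s^{\,D+n-m}$, and since $\mu(A)$ is a positive constant, $\MCP(0,N')$ would force $s^{N'}\mu(A)\le C\,s^{\,D+n-m}$, which fails as $s\to0$ as soon as $N'<D+n-m$.

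The core is therefore the inequality $\mu(\Psi_s(A))\le Cs^{\,n-m}$. I would work in exponential coordinates of the first kind adapted to the stratification $\mathfrak g=V_1\oplus\dots\oplus V_r$, writing a point as $(z^{(1)},\dots,z^{(r)})$ with $z^{(i)}\in V_i$ and $\delta_s(z)=(s\,z^{(1)},\dots,s^{r}z^{(r)})$; here $\mu$ is Lebesgue measure, $m=\dim V_1$ and $n-m=\sum_{i\ge 2}\dim V_i$. The key geometric input is that along any geodesic $t\mapsto\gamma(t)$ issuing from $e$ with horizontal initial velocity $v\in V_1$, every higher-stratum coordinate is sub-homogeneous, namely $\gamma^{(i)}(t)=O(t^{\,i+1})$ for $i\ge 2$, uniformly over geodesics ending in $A$. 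Granting this, the $V_i$-coordinate of $\Psi_s(y)=\delta_{1/s}\gamma_y(s)$ equals $s^{-i}\gamma_y^{(i)}(s)=O(s)$, so $\Psi_s(A)$ lies in the slab $\{\,|z^{(i)}|\le Cs,\ i\ge2\,\}\cap B_{d_{SR}}(e,1)$, whose Lebesgue measure is $O\bigl(\prod_{i\ge2}(Cs)^{\dim V_i}\bigr)=O(s^{\,n-m})$, as required.

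The main obstacle is exactly this uniform sub-homogeneity estimate. Its mechanism is robust: expanding $\gamma^{(i)}$ as an iterated integral of the horizontal velocity, the naively leading contribution is annihilated by the antisymmetry $[v,v]=0$, and this cancellation propagates through the bracket structure to produce one extra power of $t$ in every stratum; the expansion is legitimate because geodesics issuing from a bounded region have uniformly bounded curvature, so $\dot\gamma(t)=v+O(t)$ with uniform constants, and crucially this uses only that $\gamma$ is a geodesic, not that the structure is ideal. The delicate points to be settled are the uniformity of the constants as $y$ ranges over $A$ and the treatment of possibly non-smooth (abnormal) minimisers, which is where the negligeable-cut-loci hypothesis enters. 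Finally I would note consistency with the known sharp case: for $\H_n$ the topological dimension is $2n+1$, the homogeneous dimension is $2n+2$ and $m=2n$, so $D+n-m=(2n+2)+(2n+1)-2n=2n+3$, recovering Juillet's exponent, where it is sharp. For higher step the argument is deliberately lossy; a stratum-by-stratum refinement $\gamma^{(i)}(t)=O(t^{\,2i-1})$ would push the exponent towards the geodesic dimension $\sum_i(2i-1)\dim V_i$, but the uniform $O(t^{\,i+1})$ estimate is all that is needed for $D+n-m$.
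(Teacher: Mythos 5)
Your strategy is a set-theoretic repackaging of the paper's proof, and its correct parts map onto it precisely: your sub-homogeneity claim $\gamma_y^{(i)}(t)=O(t^{i+1})$ for $i\ge 2$ is exactly the paper's Lemma asserting $c^{(l)}(0)\in V_1\oplus\cdots\oplus V_{l-1}$ for the pulled-back normal geodesics, and your volume count $\mu(A_s)=s^{D}\mu\bigl(\delta_{1/s}(A_s)\bigr)\le C s^{D+n-m}$ is the integrated form of the paper's statement that the Jacobian determinant of $h\mapsto \Psi(sh)$ vanishes to order at least $D+n-m$ at $s=0$. The essential difference is that the paper needs its estimate only pointwise, at one well-chosen point, whereas your argument needs it uniformly over a whole set of positive measure; this is where your proposal has a genuine gap.

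The hypothesis ``geodesic with negligeable cut loci'' does not fill that gap, contrary to what you suggest. First, negligeable cut loci gives uniqueness of the minimizer to a.e.\ $y$, not normality: the unique minimizer could be strictly abnormal, and strictly abnormal minimizers are a priori only Lipschitz (their smoothness is a well-known open problem), so ``uniformly bounded curvature'', the expansion $\dot\gamma(t)=v+O(t)$, and the cancellation $[v,v]=0$ are simply not available for them --- your derivation of sub-homogeneity, like the paper's, genuinely uses the normal Hamiltonian (Euler--Arnold) structure. Second, even along normal minimizers, the constant in $O(t^{i+1})$ is governed by the \emph{full} initial covector, including its components on the higher strata, not by the speed alone; and the covectors of normal minimizers reaching $\bar{B}_{d_{SR}}(e,1)$ need not be bounded: in a Carnot group possessing a strictly abnormal minimizer (such groups exist, cf.\ \cite{gk95}), normal covectors must blow up as the endpoint approaches the strictly abnormal locus, since otherwise a limit of them would produce a normal minimizer to it. Hence no single constant $C$ works on your $A=B_{d_{SR}}(e,1)$, and no exhaustion argument can start without first knowing that a positive-measure set of points is reached by unique \emph{normal} minimizers. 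The missing ingredient is precisely the paper's other key input: Agrachev's theorem \cite{agrachev09} that smooth points form an open dense set, giving an open set $\mathcal{V}$ whose points are joined to the identity by a unique, normal, non-conjugate minimizer with covector depending continuously on the endpoint. If you replace your unit ball by a small closed ball contained in $\mathcal{V}$, then normality holds everywhere on it, covectors are bounded by compactness, your slab estimate becomes legitimate (the slab still has measure $O(s^{n-m})$ because $\Psi_s$ maps into a fixed bounded set), and the rest of your scaling argument closes the proof exactly as you wrote it.
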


The paper is organized as follows. In Section \ref{SECprelim}, we recall some facts in sub-Riemannian geometry and Carnot groups theory. In particular, we recall important results regarding ideal sub-Riemannian structures. The proofs of Theorems \ref{THM1} and \ref{THM2} are given in Section \ref{SECproof}. The last section contains comments. 

\section{Preliminaries}\label{SECprelim}

\subsection{Sub-Riemannian structures}

Let us first recall basic facts in sub-Riemannian geometry, we refer the reader to \cite{abb12,fr10,montgomery02,riffordbook} for further details. Let $M$ be a smooth connected manifold without boundary of dimension $n\geq 3$, a sub-Riemannian structure on $M$ is given by a pair $(\Delta,g)$ where $\Delta$ is a totally nonholonomic distribution with constant rank $m\in [2,n]$ on $M$ and $g$ is a smooth Riemannian metric on $\Delta$. A path $\gamma : [0,1] \rightarrow M$ is called horizontal if it belongs to $W^{1,2}\left( [0,1] ; M\right)$ and satisfies 
$$
\dot{\gamma} (t) \in \Delta \left( \gamma(t)\right) \qquad \mbox{a.e. } t \in [0,1].
$$
From the Chow-Rashevsky Theorem, any points $x,y \in M$ can be joined by an horizontal path. For every $x\in M$ and any $v\in \Delta(x)$, we denote by $|v|_x^g$ the norm of $v$ with respect to the metric $g$. The length of an horizontal path $\gamma \in W^{1,2} \left( [0,1];M\right)$ is defined as
$$
\mbox{length}^g (\gamma) := \int_0^1 \bigl| \dot{\gamma}(t)\bigr|_{\gamma(t)}^g \, dt.
$$
For every $x, y \in M$, the sub-Riemannian distance between $x$ and $y$, denoted by $d_{SR}(x,y)$, is defined as the infimum of lengths of horizontal paths joining $x$ to $y$, that is 
$$
d_{SR}(x,y) := \inf \Bigl\{ \mbox{length}^g (\gamma) \, \vert \, \gamma \in W^{1,2} \bigl( [0,1];M\bigr), \, \gamma(0)=x, \, \gamma(1)=y \Bigr\}.
$$
The function $d_{SR}$ makes $(M,d_{SR})$ a metric space. The energy of an horizontal path $\gamma \in W^{1,2} \left( [0,1];M\right)$ is defined as 
$$
\mbox{energy}^g (\gamma) := \int_0^1 \left( \bigl| \dot{\gamma}(t)\bigr|_{\gamma(t)}^g \right)^2 \, dt.
$$
So the energy between $x$ and $y$ in $M$ is defined as
$$
e_{SR}(x,y) := \inf \Bigl\{ \mbox{energy}^g (\gamma) \, \vert \, \gamma \in W^{1,2} \bigl( [0,1];M\bigr), \, \gamma(0)=x, \, \gamma(1)=y \Bigr\}.
$$
The Cauchy-Schwarz inequality implies easily $e_{SR} = d_{SR}^2$ on $M\times M$. By the sub-Riemannian Hopf-Rinow Theorem, if $(M,d_{SR})$ is assumed to be complete, then for every $x,y\in M$ there exists at least one minimizing geodesic joining $x$ to $y$, that is an horizontal path $\gamma :[0,1] \rightarrow M$ with $\gamma(0)=x, \gamma(1)=y$ satisfying
$$
d_{SR}(x,y) =    \mbox{length}^g (\gamma) = \sqrt{\mbox{energy}^g (\gamma)}.
$$
We need now to introduce the notion of singular horizontal curves. For sake of simplicity we restrict our attention to minimizing geodesic curves. Let $x, y \in M$ and a minimizing geodesic $\gamma \in  W^{1,2} \left( [0,1];M\right)$ joining $x$ to $y$ be fixed. Since $\gamma$ minimizes the distance between $x$ and $y$ it cannot have self-intersection. Hence $(\Delta, g)$ admits an orthonormal frame along $\gamma$. There is an open neighborhood $\mathcal{V}$ of $\gamma([0,1])$ in $M$ and an orthonormal family $\mathcal{F}$ (with respect to $g$) of $m$ smooth vector fields  $X^1, \ldots, X^m$ such that
$$
\Delta (z) = \mbox{Span} \Bigl\{ X^1(z), \ldots, X^m(z) \Bigr\} \qquad \forall z \in \mathcal{V}.
$$
Moreover there is a control $u^{\gamma} \in L^2\left( [0,1];\R^m\right)$ such that
$$
\dot{\gamma} (t) = \sum_{i=1}^m u_i^{\gamma}(t) X^i\bigl( \gamma(t)\bigr) \qquad \mbox{a.e. } t \in [0,1].
$$
The End-Point mapping from $x$ is defined in an open neighborhood $\mathcal{U} \subset L^2\left( [0,1];\R^m\right)$ of $u^{\gamma}$ as
$$
\begin{array}{rcl}
    E^{x,1}_{\mathcal{F}} \, : \, \mathcal{U} & \longrightarrow & M \\
 u  & \longmapsto &  \gamma_u(1),
\end{array}
$$
where $\gamma_u$ is solution to the non-autonomous Cauchy problem
$$
\dot{\gamma}_u (t) = \sum_{i=1}^m u_i(t) X^i\bigl( \gamma_u(t)\bigr) \quad \mbox{a.e. } t \in [0,1], \quad \gamma_u(0)=x.
$$
There is locally a one-to-one correspondence between the set of horizontal paths starting from $x$ and the set of controls in $L^2\left( [0,1];\R^m\right)$. The End-Point mapping $E^{x,1}_{\mathcal{F}}$ is smooth in $\mathcal{U}$. 

\begin{definition}
A minimizing geodesic $\gamma$ is called singular if $E_{\mathcal{F}}^{x,1}$ is not a submersion at $u^{\gamma}$, that is if 
$$
D_{u^{\gamma}} E^{x,1}_{\mathcal{F}} \, : \, L^2\left( [0,1];\R^m\right) \longrightarrow T_{\gamma(1)}M
$$
is not onto.
\end{definition}

If $\gamma$ is a minimizing geodesic between $x$ and $y$ which is not singular, then it is the projection of what one calls a normal extremal, that is a trajectory of the Hamiltonian system associated canonically with $H:T^*M \rightarrow \R$ defined by
$$
H(x,p) := \frac{1}{2} \sum_{i=1}^m \bigl(  p \cdot X^i(x) \bigr)^2 \qquad \forall (x,p) \in T^*M \simeq \mathcal{V} \times \bigl( \R^n\bigr)^*.
$$
As a consequence any non-singular minimizing geodesic is smooth. (In the following definition, a geodesic is called non-trivial if it is not constant.)

\begin{definition}
Let $(\Delta,g)$ be a sub-Riemannian structure of rank $m$ on $M$. It is called ideal if it is complete and has no non-trivial singular minimizing geodesics.
\end{definition}

As explained in \cite{riffordbook}, ideal SR structures share the same properties as Riemannian manifolds outside the diagonal (the subset of $M\times M$ consisting of pairs $(x,x)$ with $x\in M$). Given a sub-Riemannian structure on $M$ and $x\in M$, we define the SR cut-locus at $x$ as 
\begin{eqnarray}\label{cutSR}
\mbox{cut}_{SR}(x) := \overline{\Sigma \left(d_{SR}(x,\cdot) \right) },
\end{eqnarray}
where $\Sigma \left(d_{SR}(x,\cdot) \right) $ denotes the set of $y\in M$ such that the pointed distance $d_{SR}(x,\cdot)$ is not differentiable at $y$. 

\begin{proposition}\label{PROPideal}
Let $(\Delta,g)$ be an ideal sub-Riemannian structure on $M$. Then the following properties hold:
\begin{itemize}
\item[(i)] The sub-Riemannian distance $d_{SR}$ is locally semiconcave outside the diagonal in $M\times M$, that is it can be written in local coordinates as the sum of a concave and a smooth function;
\item[(ii)] for every $x\in M$, the set $\mbox{cut}_{SR} (x)$ has Lebesgue measure zero;
\item[(iii)] for every $x\in M$, the set $\mbox{cut}_{SR} (x)\setminus \{x\}$ is exactly the closure of the set of $y \in M$ which can be joined to $x$ with at least two minimizing geodesics; 
\item[(iv)] for every $x\in M$, the pointed distance $d_{SR}(x,\cdot)$ is smooth on $M\setminus \cut_{SR}(x)$;
\item[(v)] for every $y\in M\setminus \{x\}$ and every minimizing geodesic from $x$ to $y$, we have 
$$
\gamma(t) \notin \cut_{SR}(x) \qquad \forall t \in (0,1).
$$
\end{itemize}
\end{proposition}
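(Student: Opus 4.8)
The common engine for all five statements is the ideal hypothesis: between two distinct points every minimizing geodesic is non-singular, hence the projection of a normal extremal and in particular smooth, so that the Hamiltonian flow of $H$ parametrizes all minimizers away from the diagonal. I would take (i) as the analytic backbone and then deduce (iv), (iii), (ii) and (v) in turn.

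For (i) I would fix $(x_0,y_0)$ with $x_0\neq y_0$ and work on a product neighborhood on which $d_{SR}$ stays bounded and bounded away from $0$. The first point is a compactness statement: the initial covectors $p_0$ of the normal minimizers joining nearby points lie in a fixed compact set. This is where ideality is essential --- were these covectors to escape to infinity, rescaling $p_0/|p_0|$ and passing to the limit would produce a covector annihilating the distribution along a limiting trajectory, i.e.\ a non-trivial singular minimizing geodesic, contradicting the hypothesis. With the covectors confined to a compact set, for each $p_0$ the action of the normal extremal issued from $(x,p_0)$ defines a smooth function of $(x,y)$ that dominates $\tfrac12 d_{SR}^2$ with equality along the genuine minimizer; non-singularity (the surjectivity of $D_{u^{\gamma}}E^{x,1}_{\mathcal{F}}$, fed into the implicit function theorem for the Hamiltonian flow) furnishes locally uniform bounds on the second derivatives of this family. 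Writing $\tfrac12 d_{SR}^2$ as their infimum then exhibits it as locally semiconcave, whence $d_{SR}$ is locally semiconcave off the diagonal. \emph{I expect this to be the main obstacle}: it is the only genuinely analytic input, and the uniform Hessian control, together with the covector compactness, is the delicate part.

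Granting (i), I would obtain (iv) as follows. On the open set $M\setminus\cut_{SR}(x)$ the distance $d_{SR}(x,\cdot)$ is differentiable, so there is a single minimizing geodesic to each such point; moreover this minimizer is non-conjugate, since a point reached as a conjugate point along a minimizing geodesic is necessarily a cut point and hence lies in $\cut_{SR}(x)$. The sub-Riemannian exponential map is therefore a local diffeomorphism near the corresponding covector, and $d_{SR}(x,\cdot)$ agrees there with a smooth function. For (iii) I would identify $\Sigma\left(d_{SR}(x,\cdot)\right)\setminus\{x\}$ with the set of points joined to $x$ by at least two minimizers: two distinct minimizers contribute two distinct limiting differentials and preclude differentiability, while a unique non-conjugate minimizer yields differentiability by (iv); the residual non-differentiability points, having a unique but conjugate minimizer, are limits of multiple-minimizer points, so passing to closures gives the asserted equality.

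For (ii), semiconcavity already makes $\Sigma\left(d_{SR}(x,\cdot)\right)$ Lebesgue-null, and the point is to control its closure. I would realize $\cut_{SR}(x)$ as the image under $\exp_x$ of the boundary of the sub-Riemannian tangent injectivity locus, a star-shaped domain whose boundary is a locally Lipschitz hypersurface; as the locally Lipschitz image of a set of topological dimension $n-1$, this image is countably rectifiable and therefore null, exactly as in the Riemannian computation recalled in the introduction. Finally (v) combines (iii) with conjugate-point theory: if $\gamma(t_0)\in\cut_{SR}(x)$ for some $t_0\in(0,1)$, then by (iii) either $\gamma(t_0)$ is reached by a second minimizer --- concatenating it with $\gamma|_{[t_0,1]}$ produces a minimizing geodesic with a corner, impossible since ideality forces every minimizer to be smooth --- or $\gamma(t_0)$ is conjugate to $x$, which contradicts the fact that $\gamma$ minimizes all the way to $t=1>t_0$. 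In either case we reach a contradiction, so no interior point of a minimizing geodesic lies in $\cut_{SR}(x)$.
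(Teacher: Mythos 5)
The paper itself offers no proof of this proposition: it is recalled from the literature (\cite{cr08}, \cite{fr10}, \cite{riffordbook}), so there is no line-by-line comparison to make. Your architecture for (i), (iii) and (iv) does match the strategy of those references --- compactness of the initial covectors of normal minimizers (where ideality enters, exactly as you say, by rescaling and producing a singular minimizer in the limit), then writing $d_{SR}^2$ locally as an infimum of smooth support functions with uniform $C^2$ bounds, and then the semiconcavity calculus (superdifferential a singleton iff differentiable, reachable gradients realized by minimizers, backward uniqueness of the Hamiltonian flow) for the cut-locus characterization. However, two of your steps have genuine gaps. For (ii), you invoke the locally Lipschitz regularity of the boundary of the tangent injectivity locus; that is a \emph{Riemannian} theorem (\cite{it01}, \cite{ln05}, \cite{cr10}, cited in the introduction precisely in that setting) whose sub-Riemannian analogue is not available --- and the picture behind it is different here, since $\exp_x$ is defined on the cotangent space and cut points accumulate at $x$ in sub-Riemannian geometry. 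The correct (and lighter) argument is the one the semiconcavity already hands you: $\Sigma\left(d_{SR}(x,\cdot)\right)$ is countably $(n-1)$-rectifiable, hence null, because singular sets of locally semiconcave functions are; and the residual set $\cut_{SR}(x)\setminus \Sigma\left(d_{SR}(x,\cdot)\right)$ consists of points whose unique minimizer has conjugate endpoint, hence is contained in the set of critical values of the smooth map $\exp_x : T^*_xM \to M$, which is Lebesgue-null by Sard's theorem.

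For (v), the ``corner'' argument is Riemannian reasoning that does not transfer: a normal extremal is determined by its \emph{covector}, not by its velocity, so two distinct normal minimizers $\sigma$ and $\gamma\vert_{[0,t_0]}$ could a priori arrive at $\gamma(t_0)$ tangentially, and the concatenation $\sigma * \gamma\vert_{[t_0,1]}$ would be $C^1$ with no corner to contradict. The repair uses ideality once more: the concatenation is minimizing, hence normal, so it carries a normal lift whose restriction to $[t_0,1]$ is a normal lift of $\gamma\vert_{[t_0,1]}$; if this lift differed from the one carried by $\gamma$, their difference would be a nonzero abnormal lift of $\gamma\vert_{[t_0,1]}$, making that (non-trivial, minimizing) piece singular --- impossible in an ideal structure. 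Hence the two lifts agree on $[t_0,1]$ and backward uniqueness of the Hamiltonian flow forces $\sigma = \gamma\vert_{[0,t_0]}$. (Alternatively, one gets differentiability of $d_{SR}(x,\cdot)$ at interior points directly by sandwiching the semiconcave function $d_{SR}(x,\cdot)$ above the semiconvex function $d_{SR}(x,y)-d_{SR}(\cdot,y)$, which touch at $\gamma(t_0)$.) Note also that your case dichotomy ``second minimizer or conjugate'' at a point of $\cut_{SR}(x)$ is not immediate from the definition (\ref{cutSR}) as a closure: for a point of $\overline{\Sigma}\setminus\Sigma$ one must pass to the limit along a sequence of multiple-minimizer points and use the covector compactness (ideality again) to conclude that the limit point carries either two minimizers or a conjugate one. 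You gesture at this in (iii) but it is a step that needs the argument, not just the assertion.
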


The first example of ideal sub-Riemannian structure is given by the Heisenberg group or more generally by fat distributions (see Section \ref{SECfat}). 

\subsection{Carnot groups}

We recall here basic facts on Carnot groups. We refer the reader to \cite{ledonne10} and references therein for further details. A Carnot group $(\G, \star )$ of step $s$ is a simply connected Lie group whose Lie algebra $\mathfrak{g}=T_0\G$ (we denote by $0$ the identity element of $\G$) admits a nilpotent stratification of step $s$, {\it i.e.} 
\begin{eqnarray}\label{carnot0}
\mathfrak{g}  = V_1 \oplus \cdots \oplus V_s,
\end{eqnarray}
with
\begin{eqnarray}\label{carnot1}
\bigl[ V_1, V_j\bigr] = V_{j+1} \quad \forall 1\leq j \leq s, \quad V_s \neq\{0\}, \quad V_{s+1}= \{0\}.
\end{eqnarray}
We denote by $m_j$ the dimension of each layer $V_j$ and by
\begin{eqnarray}\label{growth}
n= m_1 + \cdots + m_s,
\end{eqnarray}
the topological dimension of $\G$. Given $x\in \G$, the mapping $L_x :\G \rightarrow \G$ defined by 
$$
L_x(y) = x  \star  y \qquad \forall y \in \G,
$$
denotes the left-translation by the element $x$. Every $v\in \mathfrak{g} =T_0\G$ gives rise to a left-invariant vector field $X_v$ on $\G$ defined by 
$$
X_v(x) := \left(L_x\right)_* (v) := d_0L_x (v) \qquad \forall x \in \G.
$$
The exponential map 
$$
\exp_{\G}:\mathfrak{g} \rightarrow \G
$$
evaluated at $v\in \mathfrak{g}$ is given by $\gamma(1)$, where $\gamma :\R \rightarrow \G$ is solution to the Cauchy problem $\dot{\gamma}=X_v(\gamma), \, \gamma(0)=0$. By simple-connectedness of $\G$ and nilpotency of $\mathfrak{g}$, $\exp_{\G}$ is a smooth diffeomorphism, which allows to identify $\G$ with its Lie algebra $\mathfrak{g} \simeq \R^n$. Fix a vector basis $v_1, \ldots, v_n$ of $\mathfrak{g}$ satisfying  
$$
\left\{
\begin{array}{l}
v_1, \ldots, v_{m_1} \in V_1,\\
v_{m_1+1}, \ldots, v_{m_1+m_2} \in V_2,\\
\qquad \vdots\\
  v_{n-m_s+1}, \ldots, v_n \in V_s.
\end{array}
\right.
$$
The coordinates $(x_1, \ldots, x_n)$ are the exponential coordinates of $\exp_{\G} \left( \sum_{i=1}^m x_iv_i\right) \in \G$. The group law on $\G$ can be pulled back into a group law on $\R^n$ (that is still denoted by $\star$) by exponential coordinates,
$$
x \star y = \exp_{\G}^{-1} \left( \exp_{\G} \left(\sum_{i=1}^n x_i v_i\right) \star \exp_{\G} \left( \sum_{i=1}^n y_i v_i \right) \right) \qquad \forall x,y \in \R^n.
$$
With such a group law $\R^n$ is a Lie group whose Lie algebra is isomorphic to $\mathfrak{g}$, making $(\G,\star)$ and $(\R^n, \star)$ isomorphic. If $i\in \{1,\ldots,n\}$ is an index such that 
$$
m_1 + \ldots + m_{d_i-1} < i \leq m_1 + \ldots + m_{d_i}
$$
for some $d_i \in \{1, \ldots,s\}$, the coordinate $x_i$ will be said to have degree $d_i$. The family $\{\delta_{\lambda}\}_{\lambda >0}$ defined as 
$$
\delta_{\lambda} \left(x_1,\ldots, x_n\right) = \left( \lambda^{d_1} x_1,  \lambda^{d_2} x_2, \ldots, \lambda^{d_n} x_n \right)  \qquad \forall x \in \R^n,
$$ 
provides  a one-parameter family of dilations in $\R^n$. We have 
$$
\delta_1=\mbox{Id}_{\R^n} \quad \mbox{and} \quad \delta_{\lambda \lambda'}= \delta_{\lambda} \circ \delta_{\lambda'} \, \forall \lambda, \lambda'>0.
$$
For every $i=1, \ldots,n$, denote by $X^i$  the left-invariant vector field on $\R^n$, written as
$$
X^i(x) = \sum_{k=1}^n a_{ki} (x) \, \partial_k,
$$
such that $X^i(0)= \partial_i$. It can be checked that each $X^i$ is homogeneous of degree $d_i$ with respect to $\{\delta_{\lambda} \}_{\lambda >0}$, that is 
$$
a_{ki}\left(\delta_{\lambda}(x)\right) = \lambda^{d_k-d_i} a_{ki}(x) \qquad \forall k=1, \ldots,n, \, \forall x \in \R^n.
$$
In particular, each vector field $X^1, \ldots, X^{m_1}$ satisfies 
\begin{eqnarray}\label{dilation}
X^i \left( \delta_{\lambda}(x)\right) = \lambda^{-1} \, \delta_{\lambda} \left( X^i(x)\right) \qquad \forall x \in \R^n.
\end{eqnarray}
The quantity 
$$
D := \sum_{i=1}^n d_i = \sum_{j=1}^s j m_j
$$
is called the homogeneous dimension of $\G$. The Haar measure of $(\R^n,\star)$ is (up to a multiplicative constant) the Lebesgue measure $\mathcal{L}^n$. Any metric on the first layer provides a sub-Riemannian metric by translation, and subsequently gives  a sub-Riemannian structure $(\Delta,g)$ (with $\Delta(x)= (L_x)_* (V_1)$ and $g_x=(L_x)_*(g)$) on $\G$. Denote by $d_{SR}$ the sub-Riemannian distance for the left-invariant SR structure $(\Delta,g)$. The metric space $(\G,d_{SR})$ is necessarily complete and we check easily that if $\gamma :[0,1] \rightarrow \G$ is a minimizing geodesic from $x:=\gamma(0)$ to $y:=\gamma(1)$, then for every $z\in \G$, the horizontal path
$$
t \in [0,1] \, \longmapsto z \star \gamma(t)
$$
is a minimizing geodesic from $z\star x$ to $z\star y$. In particular, the sub-Riemannian distance is invariant by left-translations,
$$
d_{SR} = \left( z\star x, z\star y\right) = d_{SR} (x,y) \qquad \forall x,y, z \in \G.
$$
If we pull-back everything in $\R^n$ by exponential coordinates, then the homogeneity of the first layer (see (\ref{dilation})) implies that if $\gamma :[0,1] \rightarrow \G$ is a minimizing geodesic from $0$ to $x:=\gamma(1)$, then for every $\lambda>0$, the horizontal path $\delta_{\lambda}\circ \gamma$ is minimizing from $0$ to $x$. Then we have 
\begin{eqnarray}\label{dilation1}
d_{SR} \left(0,\delta_{\lambda}(x)\right) = \lambda \, d_{SR}(0,x) \qquad \forall x \in \R^n.
\end{eqnarray}
In particular, the homogeneity property of the SR distance yields the invariance of sub-Riemannian balls by dilations, 
\begin{eqnarray}\label{dilation2}
\delta_{\lambda} \bigl( B_{SR}(0,r)\bigr) = B_{SR} \bigl(0,\lambda r\bigr) \qquad \forall \lambda, r>0,
\end{eqnarray}
where $B_{SR}(0,\lambda)$ denotes the sub-Riemannian ball centered at the origin with radius $\lambda$.

\subsection{Sub-Riemannian Euler-Arnold equations}\label{SECEA}

In the spirit of Arnold \cite{arnold66}, we can write the equations of geodesics in the Lie algebra $\mathfrak{g}$. Recall that $m_l=:m(l)$ denotes the dimension of each layer $V_l$. For each $l=1, \ldots, s$, we pick an orthonormal basis  $e_1^l, \ldots, e_{m_l}^l$ of $V_l$ (remember (\ref{carnot0})). Thus by (\ref{carnot1}), there are structure constants $\left( \alpha_{ij,k}^{1,l}\right)$ such that
\begin{eqnarray}\label{structure}
\left[ e_{i}^{1} , e_j^{l}\right] = \sum_{k=1}^{m(l+1)} \alpha_{ij,k}^{1l} \, e_k^{l+1}.
\end{eqnarray} 
For every $l=1, \ldots, s-1$ and every $k=1, \ldots, m(l+1)$  denote by $A_{k}^{1l}$ the $m(l) \times m$ matrix whose the coefficients are given by
\begin{eqnarray}\label{structure2}
\left(A_k^{1l}\right)_{ij} = \alpha_{ij,k}^{1l}.
\end{eqnarray}
For each $i,l$, we denote by $X_i^{l}$ the left-invariant vector field on $\G$ obtained from $e_i^{l}$. Let us now consider a minimizing geodesic $\gamma :[0,1] \rightarrow \G$ starting at the origin. It is associated with a smooth control $u^{\gamma} : [0,1] \rightarrow \R^m$ (note that $m_1=m$) satisfying
 $$
\dot{\gamma} (t) = \sum_{i=1}^m u_i^{\gamma}(t) X_i^1 \bigl( \gamma(t)\bigr) \quad \forall t \in [0,1], \quad \gamma(0)=0.
$$
The following result follows easily from the Hamiltonian equation for SR geodesics.

\begin{proposition}\label{SREA}
The horizontal curve $\gamma$ is the projection of a normal extremal if and only if there are $s$ smooth functions 
$$
h^{1} : [0,1] \rightarrow \R^{m_1}, \quad \ldots, \quad h^s: [0,1] \rightarrow \R^{m_s}
$$
satisfying 
\begin{eqnarray}\label{SYSEA}
\left\{
\begin{array}{rcl}
\dot{h}^{l} & = & \sum_{k=1}^{m(l+1)}  h_k^{l+1}\, A_k^{1l} \, h^1 \quad \forall l\in 1, \ldots, s-1,\\
\dot{h}^s & = & 0,
\end{array}
\right.
\end{eqnarray}
such that
\begin{eqnarray}\label{uEA}
u_i^{\gamma}(t) = h_i^1(t) \qquad \forall i=1, \ldots, m, \, \forall t \in [0,1].
\end{eqnarray}
\end{proposition}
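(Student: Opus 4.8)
The plan is to recognize system (\ref{SYSEA}) as the Lie--Poisson (Euler--Arnold) form of the normal Hamiltonian flow, obtained by left-trivializing $T^*\G$ and evaluating the momentum on the left-invariant frame. Recall that $\gamma$ is the projection of a normal extremal precisely when there is a lift $t \mapsto p(t) \in T^*_{\gamma(t)}\G$ solving the Hamiltonian system attached to $H(x,p) = \frac{1}{2}\sum_{i=1}^m \bigl(p \cdot X_i^1(x)\bigr)^2$, that is $\dot{x} = \partial_p H$ and $\dot{p} = -\partial_x H$. For such a lift I would introduce, for a vector field $X$, the momentum function $P_X(x,p) := p \cdot X(x)$, and set $h_i^l(t) := P_{X_i^l}\bigl(\gamma(t),p(t)\bigr)$, grouping them into $h^l(t)\in\R^{m_l}$. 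Since $\{X_i^l(z)\}_{i,l}$ is a basis of $T_z\G$ at every $z$, the assignment $p \leftrightarrow (h^l)$ is a smooth fiberwise linear isomorphism; in particular the $h^l$ inherit the smoothness of $(\gamma,p)$, and normal extremals being smooth, each $h^l$ is smooth.

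The engine of the computation is the standard momentum identity $\{P_X, P_Y\} = P_{[X,Y]}$, valid for the Poisson bracket normalized so that $\dot{x}=\partial_p H$, $\dot{p}=-\partial_x H$ and $\dot{f}=\{f,H\}$; I would verify it in coordinates in one line. First, the base equation reads $\dot{\gamma} = \partial_p H = \sum_{i=1}^m P_{X_i^1}\,X_i^1(\gamma) = \sum_{i=1}^m h_i^1\, X_i^1(\gamma)$, and comparing with $\dot{\gamma}=\sum_i u_i^{\gamma} X_i^1(\gamma)$, together with the freeness of $\{X_i^1(\gamma)\}$, gives exactly (\ref{uEA}). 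Next, along the flow $\dot{h}_i^l = \{P_{X_i^l}, H\} = \sum_{j=1}^m P_{X_j^1}\{P_{X_i^l}, P_{X_j^1}\} = \sum_{j=1}^m h_j^1\, P_{[X_i^l, X_j^1]}$. Here left-invariance is decisive: $[X_i^l,X_j^1]$ is the left-invariant field of $[e_i^l,e_j^1] = -[e_j^1,e_i^l]$, which by the stratification (\ref{carnot1}) lies in $V_{l+1}$ and expands through the structure constants (\ref{structure}); substituting (\ref{structure})--(\ref{structure2}) turns the right-hand side into $\sum_k h_k^{l+1}\, A_k^{1l}\, h^1$, a quantity depending on the fiber variables alone. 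For $l=s$ the bracket falls into $V_{s+1}=\{0\}$, whence $\dot{h}^s = 0$. This is precisely (\ref{SYSEA}).

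For the converse I would run the same identities backwards. Given smooth $h^l$ solving (\ref{SYSEA}) together with (\ref{uEA}), define $p(t)\in T^*_{\gamma(t)}\G$ as the unique covector with $p(t)\cdot X_i^l(\gamma(t)) = h_i^l(t)$ for all $i,l$, which is possible because the frame is a basis. The base equation $\dot{\gamma}=\partial_p H$ then holds by (\ref{uEA}), while the fiber equation $\dot{p}=-\partial_x H$ becomes, after contracting against $\{X_i^l\}$ and reusing $\{P_X,P_Y\}=P_{[X,Y]}$, equivalent to $\dot{h}_i^l = \sum_j h_j^1\,P_{[X_i^l,X_j^1]}$, i.e. to (\ref{SYSEA}). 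Hence $(\gamma,p)$ solves Hamilton's equations and $\gamma$ is the projection of a normal extremal.

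The main obstacle is the bookkeeping in the second paragraph: matching $[X_i^l,X_j^1]=-[X_j^1,X_i^l]$, evaluated via (\ref{structure}), with the matrices $A_k^{1l}$ of (\ref{structure2}), and placing the signs and indices so that the coupling reads exactly $\sum_k h_k^{l+1} A_k^{1l} h^1$. Conceptually this is nothing more than Lie--Poisson reduction: the left-invariance of both $H$ and the frame forces $\dot{h}$ to depend on $h$ alone, with no explicit dependence on $\gamma$, which is the entire point of writing the geodesic equations in the Lie algebra in the spirit of Arnold. The only genuine work is carrying the stratified structure constants through the computation and confirming the top-layer relation $\dot{h}^s=0$.
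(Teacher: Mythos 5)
Your proposal is correct and follows essentially the same route as the paper: lift $\gamma$ to a normal extremal $(\gamma,p)$, set $h_i^l(t) = p(t)\cdot X_i^l(\gamma(t))$, differentiate along the Hamiltonian flow, and identify the resulting brackets of left-invariant fields with the structure constants via (\ref{structure})--(\ref{structure2}); your momentum identity $\{P_X,P_Y\}=P_{[X,Y]}$ (which holds with that sign precisely under the paper's bracket convention $[X,Y](x)=d_xX(Y(x))-d_xY(X(x))$) is just the packaged form of the paper's direct computation of $\dot{h}_i^l = \dot{p}\cdot X_i^l(\gamma) + p \cdot d_{\gamma} X_i^l(\dot{\gamma})$. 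The only difference is that you also spell out the converse implication, reconstructing $p(t)$ from the $h^l$ by duality against the frame, which the paper leaves implicit.
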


\begin{proof}
Define in local coordinates along $\gamma([0,1])$, the Hamiltonian $H: \mathcal{V} \times (\R^n)^* \rightarrow \R$ by 
\begin{equation}
\label{normal0}
H(x,p) := \frac{1}{2} \sum_{i=1}^m \bigl(  p \cdot X_i^1(x) \bigr)^2
\end{equation}
for all $(x,p) \in \mathcal{V} \times (\R^n)^*$. Then there is a smooth arc $p:[0,1] \longrightarrow (\R^n)^*$ such that the pair $(\gamma,p)$ satisfies
\begin{equation}
\label{normal1}
\left\{ \begin{array}{rcl}
\dot{\gamma}(t) & = & \frac{\partial H}{\partial p} (\gamma(t),p(t)) = \sum_{i=1}^m \left[p(t) \cdot X_i^1(\gamma(t)) \right]  X^i(\gamma(t)) \\
\dot{p}(t) & = & -\frac{\partial H}{\partial x} (\gamma(t),p(t)) = - \sum_{i=1}^m \left[p(t) \cdot X_i^1(\gamma(t))\right] p(t) \cdot d_{\gamma(t)} X_i^1
\end{array}
\right.
\end{equation}
for any $t \in [0,1]$ and
\begin{equation}
\label{normal2}
u^{\gamma}_i(t) = p(t) \cdot X^1_i(\gamma(t))  \qquad \forall t\in [0,1], \quad\forall i=1,\ldots, m.
\end{equation}
Setting $h_i^l:= p(t) \cdot X_i^l(\gamma(t))$, we have (with the convention $[X,Y](x) = d_xX (Y(x)) - d_xY (X(x))$)
\begin{eqnarray*}
\dot{h}_i^l(t) & = & \dot{p}(t) \cdot X_i^l(\gamma(t)) + p(t) \cdot d_{\gamma(t)} X_i^l \bigl(\dot{\gamma}(t)\bigr) \\
& = & - \sum_{j=1}^m  h_j^1(t)   p(t) \cdot d_{\gamma(t)} X_j^1 \left(X_i^l(\gamma(t))\right) +  p(t) \cdot d_{\gamma(t)} X_i^l \left( \sum_{j=1}^m h_j^1(t) X_j^1(\gamma(t)) \right) \\
& = & \sum_{j=1}^m  h_j^1(t)   p(t) \cdot \left[ X_i^1,X_j^l\right] (\gamma(t)) =  \sum_{j=1}^m h_j^1(t) p(t)  \cdot \left[ e_i^1,e_j^l\right]. 
\end{eqnarray*}  
We conclude by (\ref{structure})-(\ref{structure2}).
\end{proof}

Let us now pull back $\gamma_u :[0,1] \rightarrow \G$ by the $\G$-exponential in order to obtain a curve $c_{u}:[0,1] \rightarrow  \mathfrak{g}$. Recall that for every $v \in \mathfrak{g}$, the family of linear maps $\{\mbox{ad}_v^k\}_{k\in \N} :  \mathfrak{g} \rightarrow  \mathfrak{g}$ is defined by
$$
\mbox{ad}_v^0 (w)= w, \quad \mbox{ad}_v^{k+1}(w) = \mbox{ad}^1_v \left( \mbox{ad}_v^k(w) \right) = \left[ v, \mbox{ad}_v^k (w)\right] \qquad \forall w \in \mathfrak{g}.
$$
We have

\begin{lemma}
The smooth curve $c_{u}:[0,1] \rightarrow  \mathfrak{g}$ satisfies
\begin{eqnarray}\label{eqE}
\sum_{k=0}^{s-1} \frac{(-1)^k}{(k+1)!} \ad_{c_u(t)}^k \left( \dot{c}_u(t)\right) = \sum_{i=1}^m h_i^1(t) e_i^1 \qquad \forall t \in [0,1].
\end{eqnarray}
\end{lemma}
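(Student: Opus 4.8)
The plan is to recognize the left-hand side of \eqref{eqE} as the left-trivialized differential of the group exponential $\exp_{\G}$ applied to $\dot{c}_u(t)$, and then to match it against the horizontal equation satisfied by $\gamma_u$. First I would recall the classical formula for the differential of the exponential map of a Lie group: for every $X,Y \in \mathfrak{g}$,
$$
\bigl(dL_{\exp_{\G}(X)^{-1}}\bigr) \circ d_X\exp_{\G}(Y) = \sum_{k=0}^{\infty} \frac{(-1)^k}{(k+1)!}\, \ad_X^k(Y),
$$
the series being the power series of $(1-e^{-z})/z$ evaluated at $z=\ad_X$. Since $\mathfrak{g}$ is nilpotent of step $s$, one has $\ad_X^k=0$ for every $k\geq s$ and every $X\in\mathfrak{g}$: indeed $\ad_X^k(Y)$ is an iterated bracket of length $k+1$, hence lies in $V_{k+1}\oplus\cdots\oplus V_s$, which vanishes as soon as $k\geq s$. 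Thus the infinite sum truncates into the finite sum running from $k=0$ to $k=s-1$ that appears in \eqref{eqE}.

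Next I would apply this identity with $X=c_u(t)$ and $Y=\dot{c}_u(t)$. Since $\gamma_u(t)=\exp_{\G}(c_u(t))$, the chain rule gives $\dot{\gamma}_u(t)=d_{c_u(t)}\exp_{\G}(\dot{c}_u(t))$, so that after left-translating back to the identity,
$$
\bigl(dL_{\gamma_u(t)^{-1}}\bigr)\, \dot{\gamma}_u(t) = \sum_{k=0}^{s-1}\frac{(-1)^k}{(k+1)!}\,\ad_{c_u(t)}^k\bigl(\dot{c}_u(t)\bigr),
$$
which is exactly the left-hand side of \eqref{eqE}. On the other hand, by \eqref{uEA} the curve $\gamma_u$ is horizontal with control $h^1$, so $\dot{\gamma}_u(t)=\sum_{i=1}^m h_i^1(t)\,X_i^1(\gamma_u(t))$; recalling that $X_i^1$ is left-invariant with $X_i^1(\gamma_u(t))=(dL_{\gamma_u(t)})(e_i^1)$, left-translation to the identity yields $(dL_{\gamma_u(t)^{-1}})\dot{\gamma}_u(t)=\sum_{i=1}^m h_i^1(t)\,e_i^1$. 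Equating the two expressions for $(dL_{\gamma_u(t)^{-1}})\dot{\gamma}_u(t)$ then gives \eqref{eqE}.

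The only genuinely substantial ingredient is the differential-of-exponential formula in the first step; the hard part will be pinning it down with the correct left-translation convention and sign. I would either invoke it from a standard Lie theory reference, or establish it by the usual one-parameter argument: introduce $V(t)=(dL_{\exp_{\G}(-tX)})\,\frac{\partial}{\partial s}\big|_{s=0}\exp_{\G}(t(X+sY))$, use the structure equation of the Maurer--Cartan form (together with the fact that $s\mapsto\exp_{\G}(t(X+sY))$ is a one-parameter subgroup) to show that $V$ solves $\dot{V}=Y-\ad_X(V)$ with $V(0)=0$, and integrate to obtain $V(1)=\sum_{k\geq 0}\frac{(-1)^k}{(k+1)!}\ad_X^k(Y)$. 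Everything else is routine bookkeeping with left-invariant vector fields, and the nilpotency of $\mathfrak{g}$ is precisely what turns the series into the finite sum of the statement.
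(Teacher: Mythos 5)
Your argument is correct, and it shares its skeleton with the paper's proof: both recognize the left-hand side of (\ref{eqE}) as the left-trivialized differential of $\exp_{\G}$ evaluated on $\dot{c}_u(t)$, equate it with the left-trivialization of $\dot{\gamma}_u(t)=\sum_{i=1}^m h_i^1(t)\,X_i^1(\gamma_u(t))$ (which is $\sum_i h_i^1(t)\,e_i^1$ by left-invariance), and truncate the series using nilpotency of step $s$. The difference lies entirely in how the key identity
\[
\bigl(dL_{\exp_{\G}(X)^{-1}}\bigr)\circ d_X\exp_{\G}\,(Y)=\sum_{k\geq 0}\frac{(-1)^k}{(k+1)!}\,\ad_X^k(Y)
\]
is established. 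The paper makes this step self-contained: it realizes $\G$ as a closed subgroup of unipotent upper-triangular matrices (possible because $\G$ is simply connected and nilpotent), so that $\exp_{\G}$ becomes the matrix exponential, and then proves the identity by a bare-hands power-series expansion of $e^{-A}\frac{d}{dt}\bigl\{e^{A+tB}\bigr\}_{t=0}$, rearranged via the binomial identity $\sum_{r=0}^{i}(-1)^r\binom{j+1}{r}=(-1)^i\binom{j}{i}$. You instead quote the formula as a classical Lie-theoretic fact, with the Maurer--Cartan/ODE argument as a fallback proof. Both routes are complete: the paper's costs a page of combinatorics but requires nothing beyond the matrix embedding it invokes, while yours is shorter and more conceptual, provided the ODE argument is actually carried out (a citation to a standard reference would also be acceptable, since the formula is textbook material). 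Your observation that $\ad_X^k(Y)$ lies in $V_{k+1}\oplus\cdots\oplus V_s$ (via $[V_i,V_j]\subseteq V_{i+j}$ in a stratified algebra) is correct and slightly more precise than what is needed for the truncation.

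One slip worth fixing in your fallback argument: the one-parameter subgroup is $t\mapsto\exp_{\G}(t(X+sY))$ for \emph{fixed} $s$ --- this is what forces the $dt$-component of the pulled-back Maurer--Cartan form to equal $X+sY$ and yields $\dot V=Y-\ad_X(V)$, $V(0)=0$; the map $s\mapsto\exp_{\G}(t(X+sY))$ at fixed $t$ is not a one-parameter subgroup. This is a misstatement of which variable parametrizes the subgroup, not a gap in the argument.
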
 

\begin{proof}
As a simply connected nilpotent Lie group, we may view $\G$ as a closed subgroup of the group of upper triangular matrices of a certain size having $1$'s on the diagonal. In that case, the exponential is given by the usual exponential on matrices and the adjoint map is defined by $\mbox{ad}_A(B) = [A,B] = AB-BA$ for any $A,B$. Given two squared matrices $A,B$ of the same size, we first show that 
\begin{eqnarray}\label{matrices}
F(A,B) :=     \frac{d}{dt} \Bigl\{ e^{-A}  e^{A+tB} \Bigr\}_{t=0}    = e^{-A} \frac{d}{dt} \Bigl\{ e^{A+tB} \Bigr\}_{t=0} = \sum_{k=0}^{\infty} \frac{(-1)^k}{(k+1)!} \, \mbox{ad}_A^k (B).  
\end{eqnarray}
As a matter of fact, on the one hand we have 
\begin{eqnarray}\label{rio1}
F(A,B) & = & \left( \sum_{k=0}^{\infty} \frac{(-1)^k}{k!} \, A^k \right) \left( \sum_{l=1}^{\infty} \frac{1}{l!} \, \frac{d}{dt}  \Bigl\{ (A+tB)^l \Bigr\}_{t=0} \right)\nonumber \\
& = &  \left( \sum_{k=0}^{\infty} \frac{(-1)^k}{k!} \, A^k \right) \left( \sum_{l=1}^{\infty}\frac{1}{l!} \sum_{q=1}^l A^{l-q} B A^{q-1} \right)\nonumber\\
& =&  \left( \sum_{k=0}^{\infty} \frac{(-1)^k}{k!} \, A^k \right) \left( \sum_{l=0}^{\infty}  \sum_{r=0}^l \frac{1}{(l+1)!} \, A^{l-r} B A^{r} \right)\nonumber\\
& = & \sum_{k=0}^{\infty} \sum_{l=0}^{\infty} \sum_{s=0}^l \frac{(-1)^k}{k! (l+1)!} \, A^k A^s B A^{l-s} \nonumber\\
& = &   \sum_{k=0}^{\infty}  \sum_{s=0}^{\infty} \sum_{j=k+s}^{\infty} \frac{(-1)^k}{k! (j+1-k)!} \, A^{k+s} B A^{j-k-s} \nonumber\\
& = & \sum_{j=0}^{\infty}   \sum_{i=0}^{j} \sum_{r=0}^{i} \frac{(-1)^r}{(j+1)!} \left( \begin{array}{c}j+1\\ r\end{array}\right)\, A^{i} B A^{j-i} \nonumber\\
& = &  \sum_{j=0}^{\infty}   \sum_{i=0}^{j} \frac{1}{(j+1)!} \, \left( \sum_{r=0}^{i} (-1)^r \left( \begin{array}{c}j+1\\ r\end{array}\right) \right)\, A^{i} B A^{j-i}\nonumber \\
& = &  \sum_{j=0}^{\infty}   \sum_{i=0}^{j} \frac{1}{(j+1)!} \, \left((-1)^i \left( \begin{array}{c}j\\ i\end{array}\right) \right)\, A^{i} B A^{j-i} \nonumber\\
& = &  \sum_{j=0}^{\infty}  \frac{(-1)^j}{(j+1)!}  \left(  \sum_{i=0}^{j}  \left( \begin{array}{c}j\\ i\end{array}\right) \,  (-1)^{j-i} A^{i} B A^{j-i} \right),
\end{eqnarray}
where we used the identity 
$$
\sum_{r=0}^{i} (-1)^r \left( \begin{array}{c}j+1\\ r\end{array}\right) =  (-1)^i \left( \begin{array}{c}j\\ i\end{array}\right).
$$
On the other hand, the binomial theorem yields
$$
\mbox{ad}_A^k(B) = \sum_{r=0}^k \left( \begin{array}{c}k\\ r\end{array}\right) \,(-1)^{k-r}  A^r B A^{k-r}.
$$ 
Then the identity  (\ref{matrices}) follows from  the above formula and (\ref{rio1}). By construction, the path $c_{u}:[0,1] \rightarrow \G$ satisfies for every $t\in [0,1]$,
$$
d_{c_{u}(t)}\exp_{\G} \left( \dot{c}_{u}(t)\right) = \dot{\gamma}_u(t) = \sum_{i=1}^m u_i(t) X^1_i \left( \gamma_u(t)\right),
$$ 
which yields
\begin{eqnarray}\label{rio2}
d_{\gamma_u(t)} L_{\gamma_u(t)^{-1}} \Bigl( d_{c_{u}(t)}\exp_{\G} \left( \dot{c}_{u}(t)\right)\Bigr) &= & d_{\gamma_u(t)} L_{\gamma_u(t)^{-1}} \left( \sum_{i=1}^m u_i(t) X^1_i \left( \gamma_u(t)\right)\right) \nonumber \\
& = & \sum_{i=1}^m h_i^1(t) e_i^1.
\end{eqnarray}
But there holds for every $t\in [0,1]$,
\begin{eqnarray*}
\frac{d}{ds} \Bigl\{ \exp_{\G} \left( -\gamma_u(t)\right) \star \exp_{\G} \left( c_{u}(t+s)\right) \Bigr\}_{s=0} & = & \frac{d}{ds} \Bigl\{ L_{\gamma_u(t)^{-1}} \Bigl( \exp_{\G} \left( c_{u}(t+s)\right) \Bigr) \Bigr\}_{s=0} \\
& = & d_{\gamma_u(t)} L_{\gamma_u(t)^{-1}} \Bigl( d_{c_{u}(t)}\exp_{\G} \left( \dot{c}_{u}(t)\right)\Bigr).
\end{eqnarray*}
We conclude by (\ref{matrices}), (\ref{rio2}) and the fact that $\G$ is nilpotent of step $s$.
\end{proof}

\section{Proof of the results}\label{SECproof}

\subsection{Proof of Theorem \ref{THM1}}

Up to pulling-back the metric to $\R^n$ by the exponential map $\exp_{\G}$, we can assume that our Carnot group is $(\R^n,\star)$ equipped with a left-invariant metric $g$ and with  the Lebesgue measure  $\mathcal{L}^n$ as Haar measure.  We need to show that there is $N>0$ such that for every measurable set $A$ with $0< \mathcal{L}^n(A)<\infty$, 
\begin{eqnarray}\label{ineqreq}
\vol_{\G} (A_s) \geq s^N \, \vol_{\G} (A) \qquad \forall s \in [0,1].
\end{eqnarray}
Note that since the sub-Riemannian structure is invariant by translation, it is sufficient to prove the result for $x=0$. Let us now assume that the sub-Riemannian structure on $\G$ is ideal. 

\begin{lemma}\label{LEM1}
There is $N>0$ such that for every measurable set $A \subset B_{SR}(0,1) \setminus B_{SR}(0,1/2)$, we have
\begin{eqnarray}\label{MCPrio}
\vol_{\G} \bigl(A_{s}\bigr) \geq s^{N} \vol_{\G} (A) \qquad \forall s \in [1/2,1].
\end{eqnarray}
\end{lemma}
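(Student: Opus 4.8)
The plan is to realize $\vol_{\G}(A_s)$ as the integral over $A$ of the Jacobian of the $s$-interpolation map, and then to reduce the desired inequality to a pointwise lower bound on that Jacobian, which I will obtain by a compactness argument tailored to the annulus and to the range $s\in[1/2,1]$.

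First I would set up the interpolation map. Because the structure is ideal, every $y\in\G\setminus\cut_{SR}(0)$ is joined to $0$ by a unique minimizing geodesic, which is non-singular, hence the projection of a normal extremal and smooth (Proposition \ref{PROPideal} together with the discussion preceding the definition of ideal structures). Writing $\exp_0$ for the sub-Riemannian exponential map based at $0$ and using the homogeneity $\exp_0(sp)=\gamma_p(s)$ of normal extremals, the $s$-interpolation map $\Phi_s\colon y\mapsto \gamma_{0,y}(s)=\exp_0\bigl(s\,\exp_0^{-1}(y)\bigr)$ is well defined and smooth on $\G\setminus\cut_{SR}(0)$. By Proposition \ref{PROPideal}(v) the intermediate point $\gamma_{0,y}(s)$ never lies in $\cut_{SR}(0)$ for $s\in(0,1)$, so $\Phi_s$ is a diffeomorphism of $\G\setminus\cut_{SR}(0)$ onto its image for every $s\in(0,1]$; since $\cut_{SR}(0)$ is Lebesgue-negligible (Proposition \ref{PROPideal}(ii)), the change of variables gives $\vol_{\G}(A_s)=\int_A |\det D\Phi_s(y)|\,dy$. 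Hence it suffices to produce $N>0$ with
\[
|\det D\Phi_s(y)|\ \geq\ s^{N}\qquad\text{for a.e.\ }y\in A,\ s\in[1/2,1].
\]

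To prove this pointwise bound, set $f(s,y):=|\det D\Phi_s(y)|$. Since $\Phi_1=\mathrm{Id}$ we have $f(1,\cdot)\equiv 1$, and $f$ is smooth and strictly positive on $(0,1]\times(\G\setminus\cut_{SR}(0))$ because $\Phi_s$ has no conjugate points for interior times (again Proposition \ref{PROPideal}(v)). The bound is equivalent, after taking logarithms and using $\log s<0$, to $\frac{\log f(s,y)}{\log s}\leq N$. I would show that this quotient is bounded above on $[1/2,1)\times\bigl(\overline{B_{SR}(0,1)}\setminus B_{SR}(0,1/2)\bigr)$ by controlling three regions: on the compact part staying away from both $s=1$ and $\cut_{SR}(0)$ the quotient is continuous, hence bounded; as $s\to 1$ it extends continuously (by l'H\^opital's rule, both numerator and denominator vanishing) to $\partial_s\log f(1,y)$, which is bounded above on the annulus; and as $y$ approaches $\cut_{SR}(0)$ the factor $\det D\exp_0(sp)$ stays bounded away from zero while $\det D\exp_0(p)\to 0$ at the conjugate locus, so $f(s,y)\to+\infty$ and the quotient tends to $-\infty$. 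Consequently the supremum of the quotient is finite, and taking $N$ to be any number above it yields the claim, whence $\vol_{\G}(A_s)=\int_A f(s,y)\,dy\geq s^N\vol_{\G}(A)$.

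The main obstacle is the behavior near the cut locus, i.e.\ near the boundary of the injectivity domain, where $\exp_0$ degenerates. The key point to exploit is that this degeneration is \emph{favorable}: the interpolation inflates volume near the cut locus rather than contracting it, so the dangerous regime for a \emph{lower} bound on $f$ is the bounded interior, where compactness applies. Making the "$\to-\infty$" statement uniform, and checking the continuous extension at $s=1$ up to the cut locus, requires a careful estimate of the Jacobian of $\exp_0$ near conjugate points; the ideal hypothesis (no singular minimizers, yielding smoothness of $\exp_0$ together with the semiconcavity and regularity in Proposition \ref{PROPideal}) is precisely what makes such estimates available. Finally, the restriction to the annulus and to $s\in[1/2,1]$ keeps everything on a single compact scale; the passage to an arbitrary $A$ and arbitrary $s\in[0,1]$ is then handled separately using the dilation invariance \eqref{dilation2} of sub-Riemannian balls.
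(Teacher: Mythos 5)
Your reduction of the lemma to a pointwise Jacobian bound $|\det D\Phi_s(y)|\geq s^N$ is legitimate (injectivity of $\Phi_s$ and the change of variables do go through for ideal structures, since minimizers are strictly normal with unique covector lifts), and in fact the paper's own argument also proves a pointwise bound in disguise. The genuine gap is in the compactness argument: your three regions do not cover the joint limit $(s,y)\to(1,\bar y)$ with $\bar y\in\cut_{SR}(0)$. If $\bar y$ is a conjugate point, then along such a sequence \emph{both} $\det D\exp_0(s_kp_k)$ and $\det D\exp_0(p_k)$ tend to $0$, so both $\log f(s_k,y_k)$ and $\log s_k$ tend to $0$; the l'H\^opital extension of region two is justified only for each fixed $y$ off the cut locus, and the blow-up argument of region three requires $s$ bounded away from $1$, so neither applies in this corner. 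Worse, the statement you would need there --- that $\partial_s\log f(1,y)$ is bounded above \emph{uniformly} on the annulus minus the cut locus --- is not a soft continuity fact: it is equivalent to a uniform upper bound on the horizontal Hessian/Laplacian of $d_{SR}(0,\cdot)$ up to the cut locus, i.e.\ precisely the semiconcavity estimate that constitutes the analytic content of the lemma. Asserting it ("which is bounded above on the annulus") and deferring the "careful estimate near conjugate points" to a closing remark leaves the main point unproved. There is also a smaller slip: the cut locus is not the conjugate locus, so at cut points reached by two distinct minimizers $\det D\exp_0(p)$ does \emph{not} tend to zero and $f$ does not blow up (that case is harmless, since $\exp_0$ is a local diffeomorphism near the non-critical preimage, but your stated region-three dichotomy is false as written).

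For comparison, the paper closes exactly this gap without ever analyzing $\exp_0$ near conjugate points: by Proposition \ref{PROPideal}(i) the pointed distance $f=d_{SR}(0,\cdot)$ is locally semiconcave outside the origin, so $\mbox{Hess}_x f\leq K I_n$ uniformly on $\Omega=\left(\R^n\setminus\cut_{SR}(0)\right)\cap\left(B_{SR}(0,1)\setminus B_{SR}(0,1/4)\right)$; writing the interpolation as the flow of the synthesis field $Z=-\mathcal{P}_{\Delta(\cdot)}(d f)$, this Hessian bound gives $\mbox{div}\, Z\geq -C$ on $\Omega$, and Gronwall yields $\vol_{\G}(A_s)\geq e^{C(s-1)}\vol_{\G}(A_1)$, which dominates $s^N$ on $[1/2,1]$ for suitable $N$. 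The one-sided, uniform nature of semiconcavity is exactly what survives the degeneration at conjugate points, and it is the ingredient your argument must import (e.g.\ from \cite{cs04} via Proposition \ref{PROPideal}) to make the "bounded above on the annulus" claim, and hence the whole compactness scheme, rigorous.
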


\begin{proof}[Proof of Lemma \ref{LEM1}]
By Proposition \ref{PROPideal}, the pointed distance function $f : x \mapsto d_{SR}(0,x)$ is locally semiconcave outside the origin and smooth outside $\cut_{SR}(0)$. Then there is a constant $K>0$ (see \cite{cs04}) such that 
\begin{eqnarray}\label{Hf}
\mbox{Hess}_x f \leq K I_n,
\end{eqnarray}
for every $x$ in the open set $\Omega:=  \left( \R^n \setminus \cut_{SR}(0)\right) \cap \left( B_{SR}(0,1) \setminus B_{SR}(0,1/4)\right)$ (here $\mbox{Hess}$ denotes the canonical Hessian in $\R^n$). Denote by $Z: \Omega \rightarrow \R^n$ the optimal synthesis associated with $f=d_{SR}(0,\cdot)$, that is the smooth vector field defined by 
\begin{eqnarray}\label{defX}
Z(x) := - \mathcal{P}_{\Delta(x)} \left( d_x f\right) \qquad \forall x \in \Omega,
\end{eqnarray}
where $\mathcal{P}_{\Delta(x)}  : \left( \R^n\right)^* \rightarrow \R^n$ denotes the projection to $\Delta(x)$ which is defined by
$$
\mathcal{P}_{\Delta(x)} (p) := v \in \Delta(x) \mbox{ such that } p\cdot w = g(v,w) \, \forall w \in \Delta(x).
$$
Note that since $f=d_{SR}(0,\cdot)$ is solution to the horizontal eikonal equation 
$$
H\bigl(x,d_xf\bigr) =\frac{1}{2} \qquad \forall x \in \R^n \setminus \{0\},
$$
the vector field $Z$ has always norm $1$ with respect to $g$. We denote by $\phi_t^Z$ the flow of $Z$. Set $\mathcal{A}:= B_{SR}(0,1) \setminus B_{SR}(0,1/2)$ and fix a measurable set $A\subset \mathcal{A}$.  Note that 
$$
A_s = \phi^Z_{1-s} (A_1) \subset \Omega \qquad \forall s \in [1/2,1].
$$
Then we have by definition of the divergence,
$$
\frac{d}{dt} \Bigl\{ \mathcal{L}^n \bigl(\phi^Z_t \bigl(A_{1}\bigr) \bigr)\Bigr\} = \int_{\phi^Z_t(A_1)} \mbox{div}_x  Z \, dx \qquad \forall t \in (0,1/2].
$$
Then writing (\ref{defX}) as $Z(x) = -G\left(x,d_x f\right)$ and setting  $G^p:=G(\cdot,p), G^x := G(x,\cdot)$, we have (note that $G^x$ is linear)
$$
\mbox{div}_x Z = \sum_{i=1}^n \frac{\partial Z_i}{\partial x_i} (x) = - \mbox{div}_x G^{d_x f} - \mbox{tr} \left( G^x \circ \mbox{Hess}_x f\right) \qquad \forall x \in \Omega.
$$
The first term in the above formula is bounded ($f$ is Lipschitz on $\Omega$ and $G$ is smooth) and by (\ref{Hf}) the second term is bounded from below ($G^x$ is a linear projection). This shows that there is $C>0$ such that for every $t\in (0,1/2]$,
$$
\frac{d}{dt} \Bigl\{ \mathcal{L}^n \bigl( A_{1-t}\bigr) \Bigr\}=  \frac{d}{dt} \Bigl\{ \mathcal{L}^n \bigl(\phi^X_t \bigl(A_{1}\bigr) \bigr)\Bigr\}  \geq  \int_{\phi^X_t(A_1)} - C dz \geq - C \mathcal{L}^n \bigl(A_{1-t}\bigr).
$$
By Gronwall's Lemma, we infer that for every $s\in [1/2,1)$,
$$
\mathcal{L}^n \bigl(A_s\bigr) \geq e^{C(s-1)} \mathcal{L}^n \bigl(A_1\bigr).
$$
We conclude easily.
\end{proof}

\begin{lemma}\label{LEM2}
There is $N>0$ such that for every $k\in \Z$ and every measurable set 
\begin{eqnarray}\label{assumptionrio}
A \subset B_{SR}\left(0,\frac{1}{2^k}\right) \setminus B_{SR}\left(0,\frac{1}{2^{k+1}}\right),
\end{eqnarray}
we have
\begin{eqnarray}\label{MCPrio2}
\vol_{\G} (A_s) \geq s^{N} \, \vol_{\G} (A) \qquad \forall s \in [0,1].
\end{eqnarray}
\end{lemma}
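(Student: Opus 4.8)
The plan is to bootstrap from Lemma~\ref{LEM1}, which controls the interpolations of sets lying in the fixed annulus $\mathcal{A}=B_{SR}(0,1)\setminus B_{SR}(0,1/2)$ for $s\in[1/2,1]$, by exploiting the two structural features of a Carnot group: the homogeneity of the distance under the dilations $\delta_\lambda$ and the semigroup behaviour of the interpolation maps. First I would record two commutation properties of the map $A\mapsto A_s$. Since $\delta_\lambda$ preserves $\cut_{SR}(0)$ (by (\ref{dilation1}) the identity $d_{SR}(0,\delta_\lambda(\cdot))=\lambda\, d_{SR}(0,\cdot)$ makes $d_{SR}(0,\cdot)$ differentiable at $x$ if and only if it is at $\delta_\lambda(x)$) and maps the unique minimizing geodesic $\gamma_y$ from $0$ to $y$ onto the unique one from $0$ to $\delta_\lambda(y)$, one gets $(\delta_\lambda A)_s=\delta_\lambda(A_s)$. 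Moreover, because any interior point $\gamma_y(s)$, $s\in(0,1)$, of a minimizing geodesic avoids $\cut_{SR}(0)$ by Proposition~\ref{PROPideal}(v), the unique geodesic from $0$ to $\gamma_y(s)$ is $\tau\mapsto\gamma_y(s\tau)$, whence the semigroup identity $(A_{s_1})_{s_2}=A_{s_1s_2}$ holds up to the negligible set $\cut_{SR}(0)$.

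Next I would upgrade Lemma~\ref{LEM1} to every dyadic annulus, keeping $s\in[1/2,1]$ and the same exponent $N$. Given $A$ as in (\ref{assumptionrio}), set $\tilde A=\delta_{2^k}(A)\subset\mathcal{A}$ by (\ref{dilation2}). Using the commutation $(\delta_{2^{-k}}\tilde A)_s=\delta_{2^{-k}}(\tilde A_s)$, the fact that $\delta_{2^{-k}}$ scales $\mathcal{L}^n$ by the factor $2^{-kD}$ (where $D$ is the homogeneous dimension, since the Jacobian of $\delta_\lambda$ is $\lambda^{D}$), and Lemma~\ref{LEM1} applied to $\tilde A$, the two scaling factors $2^{\pm kD}$ cancel and one obtains $\vol_{\G}(A_s)\ge s^N\vol_{\G}(A)$ for all $s\in[1/2,1]$.

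The final step removes the restriction $s\ge 1/2$. Given $s\in(0,1)$, choose the unique integer $j\ge 0$ with $\sigma:=2^js\in[1/2,1]$, so that $s=\sigma\,2^{-j}$ and, by the semigroup identity, $A_s=\bigl((\cdots(A_{1/2})_{1/2}\cdots)_{1/2}\bigr)_\sigma$ with the $j$ halvings performed first. The crucial observation is that interpolation by exactly $1/2$ sends a dyadic annulus onto the next one: if $B$ lies in $B_{SR}(0,2^{-i})\setminus B_{SR}(0,2^{-i-1})$ then, since $d_{SR}(0,\gamma_y(1/2))=\frac12 d_{SR}(0,y)$, the set $B_{1/2}$ lies in $B_{SR}(0,2^{-i-1})\setminus B_{SR}(0,2^{-i-2})$. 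Hence every intermediate set $A_{2^{-i}}$ again lies in a single dyadic annulus, so the dyadic estimate of the previous step applies at each stage with $s=1/2$, giving $\vol_{\G}(A_{2^{-j}})\ge 2^{-jN}\vol_{\G}(A)$; applying it once more with $\sigma\in[1/2,1]$ to the single-annulus set $A_{2^{-j}}$ (which sits in annulus $k+j$) yields $\vol_{\G}(A_s)=\vol_{\G}\bigl((A_{2^{-j}})_\sigma\bigr)\ge\sigma^N 2^{-jN}\vol_{\G}(A)=s^N\vol_{\G}(A)$, the endpoints $s=0,1$ being trivial.

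The main point requiring care is precisely the order of operations in this last step: performing the $[1/2,1]$-interpolation by $\sigma$ \emph{before} the halvings would produce a set $A_\sigma$ straddling two dyadic annuli, to which neither the hypothesis (\ref{assumptionrio}) nor the rescaled Lemma~\ref{LEM1} directly applies. Carrying out all the exact halvings first keeps every intermediate set inside a single dyadic annulus, which is what legitimizes the chaining of the single-annulus estimate. The only other ingredient to verify is that the semigroup and dilation identities hold off the negligible set $\cut_{SR}(0)$, which is guaranteed by Proposition~\ref{PROPideal}.
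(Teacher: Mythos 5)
Your proof is correct and follows essentially the same strategy as the paper's: dilation covariance of the interpolation (with the Jacobian factor $\lambda^{D}$ cancelling), the semigroup identity $(A_{s_1})_{s_2}=A_{s_1 s_2}$ justified by Proposition \ref{PROPideal}(v), and iterated application of Lemma \ref{LEM1} along dyadic halvings. The only cosmetic difference is the order of composition --- you perform the halvings first and the $[1/2,1]$-interpolation last so that every intermediate set sits in a genuinely dyadic annulus, whereas the paper writes $A_s=\bigl(A_{2^l s}\bigr)_{1/2^l}$ and handles the intermediate (non-dyadic) ratio-two annuli by rescaling with dilations $\delta_{(2s)^{-1}}$ of arbitrary ratio; both orderings are valid.
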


\begin{proof}[Proof of Lemma \ref{LEM2}]
Recall that for every $\lambda>0$, $\delta_{\lambda}$ denotes the dilations in $\R^n$ of ratio $\lambda$. By dilations properties (\ref{dilation1})-(\ref{dilation2}), for every integer $k$ and every measurable set $A$ satisfying (\ref{assumptionrio}), we have 
$$
 \delta_{2^k}(A) \subset B_{SR}(0,1) \setminus B_{SR}(0,1/2) \quad \mbox{and} \quad \delta_{2^k}\bigl(A_s\bigr) = \left( \delta_{2^k}(A)\right)_s \, \forall s \in [0,1].
$$
As a consequence, it is sufficient to prove (\ref{MCPrio2}) with a measurable set $A$ satisfying (\ref{assumptionrio}) with $k=0$.  
Note that for every $s\in (0,1/2)$, $A_{s} = \left(A_{2s}\right)_{1/2}$. Given $s\in (1/4,1/2)$, we denote by $\delta_{(2s)^{-1}}$ the dilation of ratio $1/(2s)$ and we set $$
B:=\delta_{(2s)^{-1}} \left(A_{2s}\right) \subset B_{SR}(0,1) \setminus B_{SR}(0,1/2).
$$
By dilation properties (see (\ref{dilation1})), $B_{1/2} = \delta_{(2s)^{-1}} \left(A_{s}\right)$. Then using (\ref{MCPrio}), we get (recall that $D$ denotes the homogeneous dimension)
$$
(2s)^{-D} \vol_{\G}\bigl( A_{s}\bigr) = \vol_{\G} \bigl( B_{1/2}\bigr) \geq \frac{1}{2^{N}} \vol_{\G} (B) = \frac{(2s)^{-D}}{2^{N}} \vol_{\G} \bigl(A_{2s}\bigr).
$$
Thus we obtain recursively (we set $\kappa := 2^{-N}$)
$$
\mbox{vol} \bigl(A_{1/2^l}\bigr) \geq \kappa^k \mbox{vol} (A).
$$
Which implies for every $s\in [0,1]$ with $2^l s \in [1/2,1]$ (again using  (\ref{MCPrio})),
$$
\vol_{\G} \bigl( A_s\bigr) = \vol_{\G} \left( \bigl( A_{2^ls}\bigr)_{1/2^l}\right) \geq \kappa^l \vol_{\G} \bigl(  A_{2^ls}\bigr) \geq  \kappa^l 2^{lN} s^{N} \vol_{\G}(A) = s^{N} \vol_{\G}(A).
$$
This concludes the proof.
\end{proof}

We conclude easily the proof of the ideal case by decomposing any measurable set $A\subset \R^n$ with  $0< \mathcal{L}^n(A)<\infty$ as 
$$
A = \bigcup_{k\in \Z}  A^k \quad \mbox{with} \quad A^k := A \cup \left( B_{SR}\left(0,\frac{1}{2^k}\right) \setminus B_{SR}\left(0,\frac{1}{2^{k+1}}\right)\right)
$$
and applying Lemma \ref{LEM2}.

\subsection{Proof of Theorem \ref{THM2}}

For every $h\in \R^n$, the solution $h=h_{h}:[0,1] \rightarrow \R^n$ of the system (\ref{SYSEA}) starting at $h$ gives the control law $u=u_{h} :[0,1] \rightarrow \R^m$ of the normal extremal starting at the origin with covector $p= \sum_{i=1}^n h_i dv_i$ (where $dv_1, \ldots, dv_n$ denotes the dual basis of $v_1, \ldots, v_n$). Recall that the pull back of the corresponding geodesic, which is denoted by $c= c_{h}:[0,1] \rightarrow  \mathfrak{g}$  satisfies (see (\ref{eqE})) 
\begin{eqnarray}\label{riolast} 
\sum_{k=0}^{s-1} \frac{(-1)^k}{(k+1)!} \mbox{ad}_{c(t)}^k \left( \dot{c}(t)\right) = \sum_{i=1}^m h_i(t) e_i^1 \qquad \forall t\in [0,1].
\end{eqnarray}
For every integer $l\geq 0$, we denote by $c^{(l)}(0)$ the $l$-th derivative of $c$ at $t=0$ and we set $V_l=\{0\}$ for $l\geq s+1$.

\begin{lemma}\label{LEMl}
For every $h \in \R^n$,
$$
\dot{c}(0) \in V_1 \quad \mbox{and} \quad   
c^{(l)}(0) \in V_1  \oplus \cdots \oplus V_{l-1} \, \forall l \geq 2.
$$
\end{lemma}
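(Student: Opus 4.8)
The plan is to work entirely with the pulled-back curve $c=c_h$ and the single identity (\ref{riolast}), extracting information order by order at $t=0$. Write $c(t)=\sum_{l\geq 1}\frac{a_l}{l!}t^l$ with $a_l=c^{(l)}(0)$ (note $c(0)=0$ because $\gamma(0)=0$), and let $\pi_j:\mathfrak{g}\to V_j$ be the projection onto the $j$-th layer. For $j\geq 2$, the assertion $a_l\in V_1\oplus\cdots\oplus V_{l-1}$ for all $l\leq j$ is exactly the statement that the function $\pi_j(c(\cdot))$ vanishes to order at least $j+1$ at the origin. Writing $v_j$ for the order of vanishing of $\pi_j(c(\cdot))$ at $t=0$ (with $v_j=+\infty$ if $\pi_j\circ c\equiv 0$), the whole lemma is equivalent to the two claims $v_1\geq 1$ and $v_j\geq j+1$ for every $j\geq 2$, which I would prove by induction on $j$.

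For the base step, I evaluate (\ref{riolast}) at $t=0$. Since $c(0)=0$, every term with $k\geq 1$ carries a factor $\ad_{c(0)}=0$ and drops out, leaving $\dot c(0)=\sum_i h_i(0)e_i^1\in V_1$. This gives $a_1\in V_1$ (so $v_1\geq 1$) and, crucially for the sequel, it identifies the leading Taylor coefficient of the first-layer part: $\frac{d}{dt}\big|_{0}\,\pi_1(c(t))=h^1(0)$, where I abbreviate $h^1(0)=\sum_i h_i(0)e_i^1$.

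For the inductive step, fix $j\geq 2$ and assume $v_p\geq p+1$ for every $p$ with $2\leq p<j$. Projecting (\ref{riolast}) onto $V_j$ and using that its right-hand side lies in $V_1$, I obtain
$$
\pi_j\big(\dot c(t)\big)=-\sum_{k=1}^{s-1}\frac{(-1)^k}{(k+1)!}\,\pi_j\Big(\ad_{c(t)}^k\big(\dot c(t)\big)\Big).
$$
Expanding each $\ad_{c}^k(\dot c)$ multilinearly over the layers of $c$ and of $\dot c$ and using the grading $[V_a,V_b]\subseteq V_{a+b}$, only iterated brackets $[\pi_{i_1}c,[\dots,[\pi_{i_k}c,\pi_{i_0}\dot c]\dots]]$ with $i_0+i_1+\cdots+i_k=j$ (all $i_r\geq 1$, $k\geq 1$, hence every $i_r\leq j-1$) contribute. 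The order of vanishing of such a bracket is at least $\sum_{r=1}^k v_{i_r}+(v_{i_0}-1)$, the $-1$ accounting for the derivative in the $\dot c$-factor; since every index is $<j$, the inductive hypothesis together with $v_1\geq 1$ applies. If at least one index is $\geq 2$, the bound $v_p\geq p+1$ supplies an extra unit, so the order is at least $j$.

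The main obstacle is the remaining family of terms, those in which every factor sits in $V_1$, i.e. $i_0=\cdots=i_k=1$ and $k=j-1$; here the crude grading bound yields only order $j-1$, one short of what I need. These are saved by an antisymmetry cancellation. The relevant term is $\ad_{\pi_1 c(t)}^{\,j-1}\big(\pi_1\dot c(t)\big)$, whose coefficient of $t^{j-1}$ (its lowest possible degree) is obtained by replacing each factor by its own leading Taylor coefficient; by the base step both leading coefficients equal $h^1(0)$, so this coefficient is $\ad_{h^1(0)}^{\,j-1}\big(h^1(0)\big)=0$. Hence this term too vanishes to order at least $j$. Combining the two cases, $\pi_j(\dot c(t))$ vanishes to order at least $j$, so $v_j\geq j+1$, which closes the induction and proves the lemma. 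The only delicate point is precisely this cancellation: without it the grading alone is one order short, and it is the coincidence that the leading term of $\pi_1 c$ is the same vector $h^1(0)$ appearing in the bracketed factor that rescues the estimate.
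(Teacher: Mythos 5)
Your proof is correct, and its organization is genuinely different from the paper's. Both arguments extract Taylor information at $t=0$ from (\ref{riolast}), but the paper never decomposes $c$ into layers: it works with the iterated brackets $\ad_{c(t)}^k(\dot c(t))$ themselves, first showing that all their derivatives of order $\le k$ vanish at $t=0$, and then proving, by a double induction driven by the Leibniz rule applied to $\ad_c^{k+1}(\dot c)=[c,\ad_c^k(\dot c)]$, the stronger auxiliary claim (\ref{nat}): $\frac{d^{k+r}}{dt^{k+r}}\bigl\{\ad_{c(t)}^{k}(\dot c(t))\bigr\}_{t=0}\in V_1\oplus\cdots\oplus V_{k+r}$ for all $k\ge 0$, $r\ge 1$, of which the lemma is the case $k=0$. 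You instead run a single induction on the layer index $j$, tracking orders of vanishing of the projections $\pi_j\circ c$ after expanding $\ad_c^k(\dot c)$ multilinearly over the layers. Both routes ultimately rest on the same two structural facts: the grading $[V_a,V_b]\subseteq V_{a+b}$ (which neither you nor the paper derives from the stated axiom $[V_1,V_j]=V_{j+1}$; it follows by a short Jacobi-identity induction and deserves a sentence), and a bracket-antisymmetry cancellation, which in the paper is hidden in the base case $[\dot c(0),\dot c(0)]=0$ of the preliminary vanishing claim, while in your proof it appears openly as $\ad_{h^1(0)}^{j-1}\bigl(h^1(0)\bigr)=0$, killing the unique all-$V_1$ term that the naive degree count cannot handle. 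Your route buys transparency: it isolates exactly which term obstructs the count and why it cancels (and in fact the appeal to the base step to identify the two leading coefficients is not even needed there, since they coincide automatically because $\pi_1\dot c=(\pi_1 c)'$). The paper's route buys the finer statement (\ref{nat}) on all the iterated brackets, obtained without any explicit multilinear expansion, which is the form in which the information propagates most easily through its Leibniz-formula bookkeeping.
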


\begin{proof}[Proof of Lemma \ref{LEMl}]
By bilinearity of the Lie bracket in $ \mathfrak{g}$ and the binomial theorem, we check easily that for every integer $k\geq 0$, and for every integer $l\geq 1$,
\begin{eqnarray*}
\frac{d^l}{dt^l} \Bigl\{  \mbox{ad}_{c(t)}^{k+1} \left( \dot{c}(t)\right) \Bigr\}_{t=0} & = & \frac{d^l}{dt^l} \Bigl\{ \bigl[c(t), \mbox{ad}^{k}_{c(t)} \left( \dot{c}(t)\right) \bigr]\Bigr\}_{t=0} \\
& = & \sum_{r=0}^l \left( \begin{array}{c}l\\ r\end{array}\right) \, \left[c^{(l-r)}(0),  \frac{d^r}{dt^r} \Bigl\{  \mbox{ad}_{c(t)}^{k} \left( \dot{c}(t)\right) \Bigr\}_{t=0} \right] \\
& = &  \sum_{r=0}^{l-1} \left( \begin{array}{c}l\\ r\end{array}\right) \, \left[c^{(l-r)}(0),  \frac{d^r}{dt^r} \Bigl\{  \mbox{ad}_{c(t)}^{k} \left( \dot{c}(t)\right) \Bigr\}_{t=0} \right], 
\end{eqnarray*}
since $c^{(0)}(0)=c(0)=0$. Then by induction we infer that 
$$
\frac{d^l}{dt^l} \Bigl\{  \mbox{ad}_{c_u(t)}^{k} \left( \dot{c}_u(t)\right) \Bigr\}_{t=0} =0 \qquad \forall k \geq 1, \, \forall l \in [0,k] ,
$$
which implies for any $l\geq k+2\geq 2$,
\begin{eqnarray}\label{rioairport}
\frac{d^l}{dt^l} \Bigl\{  \mbox{ad}_{c(t)}^{k+1} \left( \dot{c}(t)\right) \Bigr\}_{t=0} =  \sum_{r=k+1}^{l-1} \left( \begin{array}{c}l\\ r\end{array}\right) \, \left[c^{(l-r)}(0),  \frac{d^r}{dt^r} \Bigl\{  \mbox{ad}_{c(t)}^{k} \left( \dot{c}(t)\right) \Bigr\}_{t=0} \right].
\end{eqnarray}
We claim that 
\begin{eqnarray}\label{nat}
\frac{d^{k+r}}{dt^{k+r}} \Bigl\{  \mbox{ad}_{c(t)}^{k} \left( \dot{c}(t)\right) \Bigr\}_{t=0} \in V_1  \oplus \cdots \oplus V_{k+r} \qquad \forall k \geq 0, \, \forall r \geq 1,
\end{eqnarray}
which gives the desired result for $l=0$ and $r\geq 1$. Let us prove it by induction. Taking $t=0$ in (\ref{riolast}) and its derivative gives 
$$
\dot{c}(0)  =  \sum_{i=1}^m h_i(0) e_i^1, \quad \ddot{c}(0) = \sum_{i=1}^m \dot{h}_i(0) e_i^1 \in V_1,
$$
whose the second equality means that (\ref{nat}) is satisfied with $k=0, r=1$. Applying (\ref{rioairport}) with $l=k+2$ yields
$$
\frac{d^{k+2}}{dt^{k+2}} \Bigl\{  \mbox{ad}_{c(t)}^{k+1} \left( \dot{c}(t)\right) \Bigr\}_{t=0} = (k+2) \, \left[\dot{c}(0),  \frac{d^{k+1}}{dt^{k+1}} \Bigl\{  \mbox{ad}_{c(t)}^{k} \left( \dot{c}(t)\right) \Bigr\}_{t=0} \right] \qquad \forall k\geq 0.
$$
We deduce easily that (\ref{nat}) holds for any pairs $(k,r)$ with $k\geq 0$ and $r=1$. Assume now that  (\ref{nat}) holds for any pairs $(k,r)$ with $k\geq 0$ and $r\leq q$ for some integer $q\geq 1$ and show how to deduce the result for the pairs $(k,q+1)$. Taking $q+1$ derivatives in  (\ref{riolast}) gives
\begin{eqnarray*}
c^{(q+2)}(0)  &= &   \sum_{i=1}^m h_i^{(q+1)}(0) e_i^1 - \sum_{k=1}^{s-1}  \frac{(-1)^k}{(k+1)!} \, \frac{d^{q+1}}{dt^{q+1}} \Bigl\{  \mbox{ad}_{c(t)}^{k} \left( \dot{c}(t)\right) \Bigr\}_{t=0} \\
& = &   \sum_{i=1}^m h_i^{(q+1)}(0) e_i^1 - \sum_{k=1}^{q}  \frac{(-1)^k}{(k+1)!} \, \frac{d^{q+1}}{dt^{q+1}} \Bigl\{  \mbox{ad}_{c(t)}^{k} \left( \dot{c}(t)\right) \Bigr\}_{t=0} \\
& = &  \sum_{i=1}^m h_i^{(q+1)}(0) e_i^1 - \sum_{k=1}^{q}  \frac{(-1)^k}{(k+1)!} \, \frac{d^{k+(q+1-k)}}{dt^{k+(q+1-k)}} \Bigl\{  \mbox{ad}_{c(t)}^{k} \left( \dot{c}(t)\right) \Bigr\}_{t=0}.
\end{eqnarray*}
Since $q+1-k \leq q$ for every $k\in [1,q]$, we infer that 
$$
c^{(q+2)}(0) = \frac{d^{q+1}}{dt^{q+1}} \Bigl\{  \mbox{ad}_{c(t)}^{0} \left( \dot{c}(t)\right) \Bigr\}_{t=0} \in V_1  \oplus \cdots \oplus V_{q+1}.
$$
Now applying (\ref{rioairport}) with $l=k+1+q+1$ and $k\geq 0$ yields
\begin{eqnarray*}
\frac{d^l}{dt^l} \Bigl\{  \mbox{ad}_{c(t)}^{k+1} \left( \dot{c}(t)\right) \Bigr\}_{t=0} & = & \sum_{r=k+1}^{k+1+q} \left( \begin{array}{c}l\\ r\end{array}\right) \, \left[c^{(l-r)}(0),  \frac{d^r}{dt^r} \Bigl\{  \mbox{ad}_{c(t)}^{k} \left( \dot{c}(t)\right) \Bigr\}_{t=0} \right] \\
& = & \sum_{r=1}^{q+1} \left( \begin{array}{c}l\\ k+r\end{array}\right) \, \left[c^{(q+2-r)}(0),  \frac{d^{k+r}}{dt^{k+r}} \Bigl\{  \mbox{ad}_{c(t)}^{k} \left( \dot{c}(t)\right) \Bigr\}_{t=0} \right] 
\end{eqnarray*}
We conclude easily by induction on $k$. 
\end{proof}

Let us finish the proof of Theorem \ref{THM2}. Thanks to a result by Agrachev  \cite{agrachev09}, the set of smooth points is open and dense in $\G$. This implies that there are $x\in \G$ and an open (and bounded) neighborhood $\mathcal{V}$ of $x$ such that for every $y\in \mathcal{V}$, there is a unique minimizing geodesic $\gamma_y: [0,1] \rightarrow \G$ from $0$ to $y$. Moreover, this geodesic is not a singular point of the sub-Riemannian exponential map $\exp_0 : T_0^*\G =\mathfrak{g}^* \rightarrow \G$; in particular $\gamma_y$ is the projection of a unique normal extremal. By analyticity, this implies that for every $t\in [0,1]$ small enough and every $y\in \mathcal{V}$, there is a unique minimizing geodesic from $0$ to $\gamma_y(t)$ which in addition is not a singular point of the SR exponential map (see \cite{riffordbook}). Let us pull back everything in $\mathfrak{g}$. Define the map $\Psi : \R^n \rightarrow \mathfrak{g}$ by (we now denote $c$ by $c_{h}$ to stress the dependence of $c$ upon $h$)
$$
\Psi (h) := c_{h} (1) \qquad  \forall h \in \R^n.
$$
By the above discussion, we have for every measurable set $A \subset \mathcal{V}$ and every $s\in [0,1]$ small enough,
$$
\mathcal{L} (A_1) = \int_{D_1}  D(h) \, dh \quad \mbox{and} \quad 
\mathcal{L} (A_s) = \int_{D_s}  D(h)  \, dh = \int_{D_1} s^n D(sh)  \, dh,
$$
where $D$ denotes the Jacobian determinant of $\Psi$ (which is nonnegative). Therefore if a sub-Riemannian structure on $\G$ makes it a measured metric space which is geodesic with negligeable cut loci satisfying $\MCP(0,N)$, then for every measurable set $A  \subset \R^n$ with $\Psi(A) \subset  \mathcal{V}$, there holds
$$
 \int_{A} s^n D(sh)  \, dh \geq s^N \int_{A}  D(h) \, dh \qquad \forall s \in [0,1] \mbox{ small},
$$
which implies
$$
D\bigl(sh_x\bigr) \geq s^{N-n} D\bigl(h_x\bigr)  \qquad \forall s \in [0,1],
$$
where $h_x\in \R^n$ is defined as $\Psi(h_x)=x$. Define the function $\tilde{\Psi} : [0,1] \times \R^n \rightarrow \mathfrak{g}$ by
$$
\tilde{\Psi} (s,h) =  \Psi(sh) \qquad \forall s \in [0,1], \, \forall h \in \R^n.
$$
It is smooth (it is indeed analytic) and  satisfies
\begin{eqnarray}\label{18dec1}
\det \left( \frac{\partial \tilde{\Psi}}{\partial h} \bigl(s,h_x)\bigr) \right) \geq s^N \det \left( \frac{\partial \tilde{\Psi}}{\partial h} \bigl(1,h_x)\bigr) \right) \qquad \forall s\in [0,1] \mbox{ small}.
\end{eqnarray}
Note that
$$
\frac{d}{ds}  \left\{ \det \left(\frac{\partial \tilde{\Psi}}{\partial h} \bigl(s,h_x\bigr)\right) \right\}_{s=0} = \sum_{i=1}^n \det \left( \left[ \frac{\partial^2 \tilde{\Psi}}{\partial h \partial s} \bigl(0,h_x\bigr) \right]_i \right),
$$ 
where $ \left[ \frac{\partial \tilde{\Psi}}{\partial h \partial s} \bigl(0,h_x\bigr) \right]_i$ is the Jacobian matrix  
$$
\left[ \frac{\partial \tilde{\Psi}}{\partial h_1} \bigl(0,h_x\bigr), \cdots,  \frac{\partial \tilde{\Psi}}{\partial h_n} \bigl(0,h_x\bigr)\right]
$$
whose the $i$-th column is replaced by 
$$
\frac{ \partial^2 \tilde{\Psi}}{\partial h_i \partial s} \bigl(0,h_x\bigr) = \frac{\partial \dot{c}_{h} }{\partial \bar{h}} (0).
$$
More generally, the $k$-th derivative
\begin{eqnarray}\label{18dec2}
\frac{d^k}{ds^k}  \left\{ \det \left(\frac{\partial \tilde{\Psi}}{\partial h} \bigl(s,h_x\bigr)\right) \right\}_{s=0}
\end{eqnarray}
can be expressed as a sum of determinants of matrices of the form
\begin{eqnarray}\label{18dec3}
\left[ \frac{\partial^{1+\beta_1} \tilde{\Psi}}{\partial h \partial s^{\beta_1}} (0), \cdots,  \frac{\partial^{1+\beta_n} \tilde{\Psi}}{\partial h \partial s^{\beta_n}} (0)\right]
\end{eqnarray}
with $\beta_1, \ldots , \beta_n$ some integers verifying
$$
\beta_1, \ldots, \beta_n \geq  0 \quad \mbox{and} \quad \beta_1 + \cdots + \beta_n  \leq k.
$$
By Lemma \ref{LEMl}, we know that 
$$
 \frac{\partial^{2} \tilde{\Psi}}{\partial h \partial s} (0) = \frac{\partial \dot{c}}{\partial h}(0) \in V_1 \quad \mbox{and} \quad  \frac{\partial^{1+l} \tilde{\Psi}}{\partial h \partial s^{l}} (0) = \frac{\partial c^{(l)} }{\partial h}(0) \in  V_1  \oplus \cdots \oplus V_{l-1} \, \forall l \geq 2.
$$
Then in order to be non-vanishing, the sum which gives the $k$-th derivative (\ref{18dec2}) has to contain a term of the form (\ref{18dec3}) with a set $\{\beta_1,\ldots, \beta_n\}$ consisting of at least $m_1$ elements $\geq 1$ and for every $k=2, \ldots, s$, $m_k$ elements $\geq k+1$. This gives the result.
\section{Comments and open problems}  
 
 \subsection{On ideal Carnot groups}\label{SECfat}
Recall that a totally nonholonomic distribution $\Delta$ on a smooth manifold $M$ is called fat if, for every $x\in M$ and every section $X$ of $\Delta$
with $X(x) \neq 0$, there holds
\begin{eqnarray}\label{propfat}
T_xM = \Delta(x)+ \bigl[X,\Delta\bigr](x),
\end{eqnarray}
where 
$$
\bigl[X,\Delta \bigr] (x) := \Bigl\{ [X,Z](x) \, \vert \, Z \mbox{ section of } \Delta \Bigr\}.
$$
Any fat distribution does not admit non-trivial singular curve. As a consequence any fat Carnot group is ideal. Fat Carnot groups are Carnot group of step $s=2$ such that 
$$
\bigl[ V_1, v\bigr] = V_2 \qquad \forall v \in V_1.
$$
This is the case of Heisenberg groups. We refer the reader to \cite{montgomery02,riffordbook} for further details on fat distributions.

Let $\G$ be a Carnot group whose first layer is equipped with a left-invariant metric. Using the same notations as in Section \ref{SECEA} and proceeding as in the proof of Proposition \ref{SREA}, singular curves can be characterized as follows (see \cite{gk95}):

\begin{proposition}
Let $\gamma \in  W^{1,2} \left( [0,T];\G\right)$ be an horizontal curve with $\gamma(0)=0$ associated with a control $u^{\gamma} \in L^2\left( [0,T];\R^m\right)$ such that
\begin{eqnarray*}
\dot{\gamma} (t) = \sum_{i=1}^m u_i^{\gamma}(t) X^1_i\bigl( \gamma(t)\bigr) \qquad \mbox{a.e. } t \in [0,T].
\end{eqnarray*}
Then $\gamma$ is singular if and only if there is an non-vanishing absolutely continuous function
$$
h = \bigl(h^{1}, \ldots, h^s\bigr) \, : \, [0,T] \longrightarrow \R^{m_1} \times  \R^{m_s}
$$
satisfying 
\begin{eqnarray*}
\left\{
\begin{array}{rcl}
\dot{h}^{l}(t) & = & \sum_{k=1}^{m(l+1)}  h_k^{l+1}(t)\, A_k^{1l} \, u^{\gamma}(t) \quad \forall l\in 1, \ldots, s-1,\\
\dot{h}^s(t) & = & 0,
\end{array}
\right.
\qquad \mbox{a.e. } t \in [0,T],
\end{eqnarray*}
such that
\begin{eqnarray*}
h^1 (t) =  0 \qquad \forall t \in [0,T].
\end{eqnarray*}
\end{proposition}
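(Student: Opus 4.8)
The plan is to recognize the stated system as the coordinate expression, in the left-invariant frame, of the abnormal (singular) extremal equations, and then to proceed exactly as in the proof of Proposition \ref{SREA}, merely keeping the genuine control $u^{\gamma}$ in place of $h^1$. The first step is to recall the first-order characterization of singularity. By definition $\gamma$ is singular when the differential $D_{u^{\gamma}} E^{0,T}_{\mathcal{F}}$ fails to be onto, equivalently (the target being finite-dimensional) when its image admits a nonzero annihilator $\bar p \in T^*_{\gamma(T)}\G \simeq (\R^n)^*$. Writing the variation of the controlled Cauchy problem along $\gamma$ and letting $\Phi(t)$ be the fundamental solution of the linearized equation $\dot y = \sum_{i=1}^m u_i^{\gamma}(t)\, d_{\gamma(t)}X_i^1 (y)$, one has $D_{u^{\gamma}} E^{0,T}_{\mathcal{F}}(v) = \Phi(T)\int_0^T \Phi(t)^{-1}\sum_{i} v_i(t) X_i^1(\gamma(t))\,dt$. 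Setting $p(t):=\bar p\cdot \Phi(T)\Phi(t)^{-1}$, one checks directly that $p$ solves the adjoint equation $\dot p(t) = -\sum_{i=1}^m u_i^{\gamma}(t)\, p(t)\cdot d_{\gamma(t)}X_i^1$ and is nonvanishing for every $t$ (by invertibility of $\Phi(t)$). The annihilation $\bar p\cdot D_{u^{\gamma}} E^{0,T}_{\mathcal{F}}(v)=0$ for all $v\in L^2$ then reduces, by $L^2$ duality, to the pointwise switching condition
$$
p(t)\cdot X_i^1(\gamma(t)) = 0 \qquad \forall i=1,\dots,m,\ \text{a.e. } t\in[0,T].
$$
Thus $\gamma$ is singular if and only if such a nonvanishing adjoint covector $p$ exists (see \cite{gk95}).

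The second step is to pass to Lie-algebra coordinates. I would define $h_i^l(t):=p(t)\cdot X_i^l(\gamma(t))$, so that $h=(h^1,\dots,h^s)$ collects the components of $p(t)$ in the pointwise basis $\{X_i^l(\gamma(t))\}$ of $T_{\gamma(t)}\G$; in particular $h$ is absolutely continuous and nonvanishing precisely when $p$ is. Reproducing verbatim the computation of Proposition \ref{SREA}, I would differentiate $h_i^l$, substitute the adjoint equation for $\dot p$ and the dynamics for $\dot\gamma$, and collapse the two resulting terms into a single bracket using the convention $[X,Y](x)=d_xX(Y(x))-d_xY(X(x))$. Left-invariance gives $p(t)\cdot[X_i^1,X_j^l](\gamma(t)) = p(t)\cdot[e_i^1,e_j^l]$, and (\ref{structure})--(\ref{structure2}) turn this into $\dot h^l = \sum_{k=1}^{m(l+1)} h_k^{l+1}\,A_k^{1l}\,u^{\gamma}$ for $1\le l\le s-1$, while $[V_1,V_s]=V_{s+1}=\{0\}$ forces $\dot h^s=0$. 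Finally the switching condition $p\cdot X_i^1(\gamma)=0$ is, by definition of $h^1$, exactly the requirement $h^1\equiv 0$.

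This yields the forward implication. For the converse I would run the argument backwards: given a nonvanishing absolutely continuous solution $h$ of the system with $h^1\equiv0$, reconstruct $p(t)$ as the covector whose components in the coframe dual to $\{X_i^l(\gamma(t))\}$ are the $h_i^l(t)$. The ODE bookkeeping being reversible, $p$ is nonvanishing and solves the adjoint equation with $p\cdot X_i^1(\gamma)\equiv0$, so $\bar p:=p(T)\neq 0$ annihilates the image of $D_{u^{\gamma}} E^{0,T}_{\mathcal{F}}$ and $\gamma$ is singular.

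The bracket bookkeeping is routine, being identical to Proposition \ref{SREA}. The point requiring genuine care, and what I expect to be the main obstacle, is the clean setup of the first-order characterization itself: unlike in Proposition \ref{SREA} the control $u^{\gamma}$ is only $L^2$ and $\gamma$ only $W^{1,2}$, so every differential identity must be read in the Carath\'eodory (almost-everywhere) sense, the adjoint covector can only be asserted absolutely continuous, and one must justify that the annihilator $\bar p$ of the image propagates to a covector $p(t)$ nonvanishing for \emph{every} $t$ rather than merely at $t=T$. Once this abnormal-extremal characterization is in place, the passage to the stated ODE system is a direct transcription of the earlier computation.
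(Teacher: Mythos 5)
Your proof is correct and takes essentially the same route as the paper: the paper gives no separate argument for this proposition, merely the indication that one should combine the adjoint-covector characterization of singular controls (citing \cite{gk95}) with the bracket computation of Proposition \ref{SREA}, with the given control $u^{\gamma}$ playing the role of $h^1$. Your write-up fleshes out exactly these two steps --- the variation-of-constants/annihilator argument for $D_{u^{\gamma}}E^{0,T}_{\mathcal{F}}$ and the transcription $h_i^l(t)=p(t)\cdot X_i^l(\gamma(t))$ in the left-invariant frame, with the reversibility of the linear ODE bookkeeping giving the converse --- so it matches the paper's intended proof.
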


As a consequence, for every $\bar{h} =\left( \bar{h}^1,\ldots, \bar{h}^s\right) \in \R^{n} \setminus \{0\}$ with $\bar{h}^1=0$ such that the linear mapping $\mathcal{S}_{\bar{h}} : \R^m \rightarrow \R^{n-m(s)}$ defined by
$$
\mathcal{S}_{\bar{h}} (v) = \left( \left[ \sum_{k=1}^{m(2)}  \bar{h}_k^{2}\, A_k^{11} \right]\, v, \cdots,  \left[ \sum_{k=1}^{m(s)}  \bar{h}_k^{s}\, A_k^{1(s-1)} \right]\, v \right)
$$
is not injective, there is an horizontal curve $\gamma :[0,1] \rightarrow \G$ associated with a constant control $u^{\gamma} \equiv \bar{u}$ (with $\bar{u} \in \mbox{Ker} (\mathcal{S}_{\bar{h}})$) which is singular. By Proposition \ref{SREA} we check easily that such a curve is the projection of a normal extremal. Since short projections of normal extremal are minimizing (see \cite{riffordbook}), this shows that any Carnot group admitting $\bar{h}  \in \R^{n} \setminus \{0\}$ with $\bar{h}^1=0$  such that $\mathcal{S}_{\bar{h}} $ is not injective is not ideal. For example, this is the case of Carnot groups of step $2$ which are not fat or Carnot groups of step $s\geq 3$ with growth vector $(m_1,\ldots, m_s)$ (remember (\ref{growth})) with $m_r<m_1$ for some $r \in \{2, \ldots, s-1\}$.\\

\subsection{Approximation of SR structures on Carnot groups}
 
The Heisenberg group $\H_1$ equipped with its canonical sub-Riemannian metric is the sub-Riemannian structure $(\R^3,\Delta,g)$ where $\Delta$ is the totally nonholonomic rank $2$ distribution spanned by the vector fields
$$
X  = \partial_{x} - \frac{y}{2} \partial_{z} \quad \mbox{ and } \quad Y = \partial_{y} + \frac{x}{2} \partial_{z},
$$
and $g$ is the metric making $\{X,Y\}$ an orthonormal family of vector fields. A Haar measure is given by the Lebesgue measure $\mathcal{L}^3$. As in \cite{juillet09}, let us introduce the one-parameter family of Riemannian metrics $g_{\epsilon}$ on $\R^3$ which are left-invariant by the Lie group structure and such that the family 
$$
\left\{X, Y, \epsilon \frac{\partial}{\partial z} \right\}
$$ 
is orthonormal. The Hamiltonian $H: \R^3 \times (\R^3)^* \rightarrow \R$ associated with this Riemannian metric is given by 
$$
H\bigl((x,y,z),(p_x,p_y,p_z)\bigr) = \frac{1}{2} \left( p_x - yp_z/2\right)^2 + \frac{1}{2} \left( p_y + xp_z/2\right)^2 + \frac{1}{2} \epsilon^2 p_z^2.
$$
The Hamiltonian system is given by 
\begin{eqnarray}\label{16oct10}
\left\{
\begin{array}{rcl}
\dot{x} & = & p_x - y p_z/2 \\
\dot{y} & = & p_y + x p_z/2 \\
\dot{z} & = & - \left(p_x - yp_z/2\right)y/2  + \left(p_y + xp_z/2\right) x/2 + \epsilon^2 p_z
\end{array}
\right.
\end{eqnarray}
and
\begin{eqnarray}\label{16oct11}
\left\{
\begin{array}{rcl}
\dot{p}_x & = & - \left( p_y + xp_z/2 \right) p_z/2 \\
\dot{p}_y & = & \left( p_x - yp_z/2 \right) p_z/2\\
\dot{p}_z & = & 0.
\end{array}
\right.
\end{eqnarray}
The solution of (\ref{16oct10})-(\ref{16oct11}) starting at $(0,\bar{p}) \in  \R^3 \times (\R^3)^*$ is given by 
\begin{eqnarray}
\left\{ 
\begin{array}{rcl}
x(t) & = & \frac{\bar{p}_y}{\bar{p}_z} \left( \cos \bigl(\bar{p}_zt\bigr)-1\right) + \frac{\bar{p}_x}{\bar{p}_z} \sin \bigl(\bar{p}_zt\bigr) \\
y(t) & = &  - \frac{\bar{p}_x}{\bar{p}_z} \left( \cos \bigl(\bar{p}_zt\bigr)-1\right) + \frac{\bar{p}_y}{\bar{p}_z} \sin \bigl(\bar{p}_zt\bigr) \\
z(t) & = & \epsilon^2 \bar{p}_z t + \frac{\bar{p}_x^2+\bar{p}_y^2}{2\bar{p}_z} \left( t - \frac{\sin(\bar{p}_zt)} {\bar{p}_z} \right)
\end{array}
\right.
\end{eqnarray}
Note that the solutions of (\ref{16oct10})-(\ref{16oct11}) are invariant by rotation. For every $\theta \in \R$, denote by $R_{\theta}$ the rotation of angle $\theta$ with vertical axis. Then we have (here $\exp_0 : T^*\R^3 = \R^3 \times (\R^3)^* \rightarrow \R^3$ denotes the sub-Riemannian exponential mapping, see \cite{riffordbook})
$$
\exp_0 \left( R_{\theta}(\bar{p})\right) = R_{\theta} \left( \exp_0 (\bar{p})\right) \qquad \forall \bar{p} \in \bigl( \R^3\bigr)^*.
$$
Let us now work in cylindrical coordinates, we represent a vector $\bar{p}$ as a triple $(\theta,\rho,p_z)$ and its image by $\exp_0$ as a triple $\left(e_{\theta},e_{\rho}, e_z\right)$ in such a way that in this new set of coordinates we have (with $p_z\neq 0$)
$$
\frac{\partial e_{\theta}}{\partial \theta} =1, \quad \frac{\partial e_{\rho}}{\partial \theta} =0, \quad \frac{\partial e_{z}}{\partial \theta} =0
$$
and
$$
\left\{
\begin{array}{rcl}
e_{\rho} \left( \theta, \rho, p_z\right) & = & \rho  \left| \frac{\sin (p_z/2)}{p_z/2}\right| \\
e_{z} \left( \theta, \rho, p_z\right) & = & \epsilon^2 p_z + \frac{\rho^2}{2p_z} \left( 1- \frac{\sin(p_z)}{p_z}\right).
\end{array}
\right.
$$
Denoting by $\tilde{\exp}_0$ the exponential mapping in this new set of coordinates, we have for every triple  $(\theta,\rho,p_z)$, 
\begin{eqnarray*}
\mbox{Jac}_{\theta,\rho,p_z} \tilde{\exp}_0 & = & \det \left( \begin{matrix}
\frac{\partial e_{\rho}}{\partial \rho} & \frac{\partial e_{\rho}}{\partial p_z} \\
 \frac{\partial e_{z}}{\partial \rho} & \frac{\partial e_{z}}{\partial p_z} 
 \end{matrix}
 \right) \\
 & = & \epsilon^2 \frac{\sin \bigl(p_z/2\bigr)}{p_z/2} + \frac{2\rho^2}{p_z^3} \Bigl( \sin \bigl( p_z/2) - \bigl( p_z/2\bigr) \cos \bigl(p_z/2\bigr) \Bigr).
\end{eqnarray*}
Therefore, for every measurable set $A \subset \R^3$ with $0< \mathcal{L}^3(A)<\infty$, we have (we denote by $\tilde{A}$ the set $A$ in our set of coordinates)
$$
\mathcal{L}^3 (A) = \int_{\tilde{A}_1} e_{\rho} \, de_{\theta} \, de_{\rho} \, de_z  =  \int_{\tilde{D}_1} \rho  \left| \frac{\sin (p_z/2)}{p_z/2}\right| \left| \mbox{Jac}_{\theta,\rho,p_z} \tilde{\exp}_0 \right|   \, d\theta \, d\rho \, dp_z,
$$
and for every $s\in [0,1]$,
\begin{eqnarray*}
\mathcal{L}^3(A_s) = \int_{\tilde{A}_s} e_{\rho} \, de_{\theta} \, de_{\rho} \, de_z & =  & \int_{\tilde{D}_s} \rho  \left| \frac{\sin (p_z/2)}{p_z/2}\right| \left| \mbox{Jac}_{\theta,\rho,p_z} \tilde{\exp}_0 \right|   \, d\theta \, d\rho \, dp_z \\
& =  & \int_{\tilde{D}_1} s^3 \rho  \left| \frac{\sin (sp_z/2)}{sp_z/2}\right| \left| \mbox{Jac}_{\theta,s\rho,sp_z} \tilde{\exp}_0 \right|   \, d\theta \, d\rho \, dp_z.
\end{eqnarray*} 
Define the function $h,k: (0,\pi) \rightarrow \R$ by 
$$
h(\lambda) := \frac{\sin (\lambda)}{\lambda}, \quad k(\lambda) :=   \sin ( \lambda) - \lambda \cos (\lambda) \qquad \forall \lambda \in (0,\pi).
$$
We check easily that the functions $\lambda \mapsto h(\lambda)$ and $\lambda \mapsto h(\lambda)k(\lambda)/\lambda^3$ are positive and decreasing on $(0,\pi)$. Then we have for any $s\in (0,1),p_z\neq 0$,
\begin{multline*}
   s^3 \rho h\bigl( sp_z/2\bigr) \left[ \epsilon^2 h\bigl( sp_z/2\bigr) + \frac{2\rho^2}{sp_z^3} k \bigl( sp_z/2\bigr) \right] \\
=   s^3 \rho  \epsilon^2 h\bigl( sp_z/2\bigr)^2 + 2 \rho^3 s^5 \left( \frac{ h\bigl( sp_z/2\bigr) h\bigl( sp_z/2\bigr) }{s^3p_z^3}\right) \\
\geq s^3 \rho  \epsilon^2 h\bigl( p_z/2\bigr)^2 + 2 \rho^3 s^5 \left( \frac{ h\bigl( p_z/2\bigr) h\bigl( p_z/2\bigr) }{p_z^3}\right)\\
\geq s^5  \rho h\bigl( p_z/2\bigr) \left[ \epsilon^2 h\bigl( p_z/2\bigr) + \frac{2\rho^2}{p_z^3} k \bigl( p_z/2\bigr) \right].
   \end{multline*}
   All in all, we get
 $$
 \mathcal{L}^3(A_s) \geq s^5 \mathcal{L}^3(A) \qquad \forall s\in [0,1].
 $$  
We leave the reader to check that the above discussion implies that the Heisenberg group $H_1$ equipped with the left-invariant Riemannian metric $g_{\epsilon}$ (with $\epsilon>0$) satisfies $\MCP(0,5)$. It also implies (taking $\epsilon=0$), as checked by Juillet \cite{juillet09}, that the Heisenberg group equipped with its canonical sub-Riemannian metric satisfies $\MCP(0,5)$. This means that the measured metric space $(H_1,d_{SR},\mathcal{L}^3)$ can be approximated (in Gromov-Hausdorff topology, see \cite{villanibook}) by a sequence of Riemannian measured metric spaces with the same curvature exponent.  We do not know if such property holds for more general Carnot groups with finite curvature exponent. Let $\G$ be a Carnot group whose first layer is equipped with a left-invariant metric $g$, assume that  it is a geodesic space with negligeable cut loci and that it satisfies $\MCP(0,N)$. Does there exists a sequence of left-invariant Riemannian metrics $\{g_{\epsilon}\}_{\epsilon>0}$ converging to $g$ together with a sequence $\{N_{\epsilon}\}_{\epsilon>0}$ converging to $N$ as $\epsilon \downarrow 0$ such that each Riemannian space $(\G,g_{\epsilon})$ satisfies $\MCP(0,N_{\epsilon})$ ? Can we expect such a result for Carnot groups which admit left-invariant Riemannian metrics with positive Ricci curvatures on the orthogonal complement of the first layer (as for $H_1$, see \cite{juillet09,milnor76}) ?

\subsection{Step $2$ Carnot groups}

Recall that a totally nonholonomic distribution $\Delta$ on a smooth manifold $M$ is called  two-generating or of step $2$ in $M$ if 
$$
T_xM = \Delta(x) + [\Delta,\Delta](x) \qquad \forall x \in M,
$$
where $[\Delta,\Delta]$ is defined as
$$
[\Delta,\Delta](x) := \Bigl\{ [X,Y](x) \, \vert \, X, Y \mbox{ sections of } \Delta \Bigr\}.
$$
In \cite{al09} (see also \cite{riffordbook}), Agrachev and Lee proved that any complete sub-Riemannian structure with a distribution of step $2$ is Lipschitz, that is its sub-Riemannian distance is locally Lipschitz outside the diagonal in $M\times M$. The lipschitzness of the sub-Riemannian distance outside the diagonal allows to recover assertions (ii)-(iv) of Proposition \ref{PROPideal} for some closed set in $M$ (which may  be bigger that the one defined in (\ref{cutSR}) (see \cite{agrachev09,riffordbook})).

\begin{proposition}\label{PROPrio}
Let $(\Delta,g)$ be a Lipschitz sub-Riemannian structure on $M$. Then for every $x\in M$, there is a closed set $\mathcal{C}(x)\subset M$ of Lebesgue measure zero such that the pointed distance $d_{SR}(x,\cdot)$ is smooth on the open set $M\setminus \mathcal{C}(x)$ and for every $y\in M\setminus \mathcal{C} (x)$ there is only one minimizing geodesic between $x$ and $y$ and it is not singular.
\end{proposition}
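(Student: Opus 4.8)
The plan is to fix $x\in M$, to write $f:=d_{SR}(x,\cdot)$ (locally Lipschitz on $M\setminus\{x\}$ by hypothesis), and to \emph{define} $\mathcal{C}(x)$ as the complement in $M$ of the set $\mathcal{S}$ of points possessing a neighborhood on which $f$ is $C^\infty$. With this definition $\mathcal{S}$ is open and $\mathcal{C}(x)$ is closed \emph{for free}, so the statement reduces to two independent claims: (a) every $y\in\mathcal{S}$ is joined to $x$ by a unique, non-singular minimizing geodesic; and (b) $\mathcal{L}^n(\mathcal{C}(x))=0$.

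Claim (a) is the soft part. If $f$ is smooth near $y$, then the vector field $Z=-\mathcal{P}_{\Delta}(df)$ of (\ref{defX}) is well defined and smooth there, of unit $g$-norm since $f$ solves the eikonal equation $H(\cdot,df)=\tfrac12$; the integral curve of $Z$ issuing from $y$ flows back to $x$, and read in reverse it is the minimizing geodesic from $x$ to $y$, realizing $d_{SR}(x,y)$. Being the projection of the normal extremal carried by the covector field $df$, it is non-singular. Uniqueness follows because any minimizer ending at a point of differentiability of $f$ must start with the covector prescribed by $d_yf$. First I would record this, using the normal/abnormal analysis already recalled in Section \ref{SECprelim}.

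For claim (b) I would bound $\mathcal{C}(x)$ by two negligible sets. By Rademacher's theorem the non-differentiability set $\Sigma(f)$ has Lebesgue measure zero. Let $\mathrm{Conj}(x)$ denote the set of critical values of the smooth sub-Riemannian exponential $\exp_x:T_x^*M\to M$; since source and target both have dimension $n$, Sard's theorem gives $\mathcal{L}^n(\mathrm{Conj}(x))=0$. The heart of the proof is then the implication $y\notin\Sigma(f)\cup\mathrm{Conj}(x)\ \Rightarrow\ y\in\mathcal{S}$. Given such a $y$, differentiability yields (as in \cite{riffordbook}) a unique minimizer, non-singular, carried by a covector $p$ with $\exp_x(p)=y$; non-conjugacy means $d_p\exp_x$ is invertible, so $\exp_x$ restricts to a diffeomorphism $V\to W$ with $p\in V,\ y\in W$, and $y'\mapsto\exp_x((\exp_x|_V)^{-1}(y'))$ is a smooth family of normal geodesics covering a neighborhood of $y$. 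To conclude $y\in\mathcal{S}$ one must show this candidate is the \emph{unique} minimizer for every $y'$ near $y$; then $f$ equals the smooth cost of that normal flow there. This gives $\mathcal{C}(x)=M\setminus\mathcal{S}\subseteq\Sigma(f)\cup\mathrm{Conj}(x)$, hence measure zero.

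The main obstacle will be the uniqueness step just invoked, i.e.\ upgrading a single non-conjugate differentiability point to an open neighborhood of smoothness. The argument I would run is by contradiction: pick $y_k\to y$ carrying either a second minimizer or a minimizer $\gamma_k$ distinct from the candidate; by completeness (Hopf--Rinow) these minimizers exist, have lengths converging to $d_{SR}(x,y)$, and by Arzel\`a--Ascoli a subsequence converges uniformly to a minimizer at $y$, which by uniqueness at $y$ is the candidate. The delicate point is to force the \emph{initial covectors} of the $\gamma_k$ into $V$, and in particular to exclude singular competitors surviving in the limit; here I would lean on the Lipschitz regularity of Agrachev--Lee \cite{al09} to pass the Pontryagin covectors to the limit and identify it with the normal extremal of covector $p$, which then places $\gamma_k$ in the range of the local diffeomorphism for large $k$ and yields the contradiction.
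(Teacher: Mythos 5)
The paper itself offers no proof of Proposition \ref{PROPrio}: it is quoted as a known consequence of Lipschitzness, with a pointer to \cite{agrachev09,riffordbook}. Your skeleton (take $\mathcal{C}(x)$ to be the complement of the open smoothness set, show this complement lies in $\Sigma(f)\cup\mathrm{Conj}(x)$, then use Rademacher and Sard) is indeed the skeleton of the argument in those references, but as written it has two genuine gaps. The first is the assertion, made twice, that a minimizer is non-singular \emph{because} it is normal (``being the projection of the normal extremal carried by the covector field $df$, it is non-singular'', and again ``differentiability yields \ldots a unique minimizer, non-singular''). In sub-Riemannian geometry ``normal'' and ``singular'' are not mutually exclusive: a minimizer can be the projection of a normal extremal and still be a critical point of the end-point map, which is the paper's definition of singular (the singular line of the flat Martinet structure is minimizing, singular, and normal). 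Differentiability of $f$ at $y$ gives uniqueness and normality, via the multiplier identity $u^{\gamma}=p_1\circ D_{u^{\gamma}}E^{x,1}_{\mathcal{F}}$, but not non-singularity. Non-singularity must be extracted from the other hypothesis: writing $\exp_x=E^{x,1}_{\mathcal{F}}\circ(q\mapsto u_q)$, surjectivity of $d_p\exp_x$ forces surjectivity of $D_{u^{\gamma}}E^{x,1}_{\mathcal{F}}$. So in your claim (b) it follows from $y\notin\mathrm{Conj}(x)$, and in your claim (a) it follows by differentiating the identity $\exp_x\bigl(q(y')\bigr)=y'$, where $q(y')$ is the field of initial covectors obtained by flowing $d_{y'}(f^2/2)$ backward, a field which is smooth precisely because $f$ is smooth near $y$. (Also, in claim (a) the integral curve of $Z$ need not reach $x$ while staying in the smoothness neighborhood, so the flow picture cannot replace the covector argument; only the latter is sound.)

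The second gap is the step you yourself flag as the main obstacle, and the tool you propose there does not close it: the Lipschitz bound controls differentials of $f$ only where $f$ is differentiable and only for normal lifts, so it neither excludes strictly abnormal minimizers to the points $y_k$ nor forces the covectors of the $\gamma_k$ into $V$. What closes it is the following. First, upgrade Arzel\`a--Ascoli to strong $L^2$ convergence of the controls: $u_k\rightharpoonup u^{\gamma}$ weakly by boundedness, and $\|u_k\|_{L^2}=f(y_k)\to f(y)=\|u^{\gamma}\|_{L^2}$, so $u_k\to u^{\gamma}$ strongly. Second, use the non-singularity of $\gamma$ established above (from non-conjugacy): since $u\mapsto D_uE^{x,1}_{\mathcal{F}}$ is continuous and surjectivity is an open condition, $D_{u_k}E^{x,1}_{\mathcal{F}}$ is onto for large $k$. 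Hence the competitors $\gamma_k$ cannot be strictly abnormal; they are normal, their Lagrange multipliers are unique and converge to the multiplier of $\gamma$, so their initial covectors eventually lie in $V$, forcing $\gamma_k$ to coincide with the candidate — the desired contradiction, after which $f^2$ equals the smooth function $2H\bigl(x,(\exp_x|_V)^{-1}(\cdot)\bigr)$ near $y$. Note that in the whole proof the Lipschitz hypothesis is used only once, through Rademacher's theorem; with these two repairs your architecture does yield the proposition.
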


Therefore, any Carnot group $\G$ of step $2$ is Lipschitz and geodesic with negligeable cut loci. By invariance by dilations and translations (as seen in the proof of Theorem \ref{THM1}), a $2$ step Carnot group $\G$ satisfies $\MCP(0,N)$ for some $N>1$ if and only if there is $N>0$ such that for every measurable set $A  \subset B_{SR}(0,1) \setminus B_{SR}(0,1/2)$, 
\begin{eqnarray}\label{noel}
\vol_{\G} (A_s) \geq s^N \, \vol_{\G} (A) \qquad \forall s \in [1/2,1].
\end{eqnarray}
The above proposition allows to show that (\ref{noel}) holds far from $\mathcal{C}(x)$ and indeed at least far from conjugate points. The validity of some MCP property depends on the behavior of the sub-Riemannian exponential mapping near conjugate points.

\subsection{Other notions of synthetic Ricci curvature bounds}

In the present paper, we have restricted our attention to the Ohta Measure Contraction Property. Many other notions of synthetic Ricci curvature bounds do exist, we refer the reader to \cite{juillet09} and \cite{villanibook} for further details. In particular, in \cite{juillet09,juilletkyoto}, Juillet checked that the canonical Sub-Riemannian on the Heisenberg group does not satisfy curvature dimension conditions in the sense of Lott-Villani \cite{lv07} and Sturm \cite{sturm06a,sturm06b}. We may expect that more general Carnot groups do not satisfy those conditions.

\addcontentsline{toc}{section}{References}


\begin{thebibliography}{99}

\bibitem{agrachev09} 
A.~Agrachev.
\newblock Any sub-Riemannian metric has points of smoothness. 
\newblock {\em Dokl. Akad. Nauk}, 424(3): 295--298, 2009
\newblock Translation in {\em Dokl. Math.}, 79(1):45--47, 2009.

\bibitem{abb12}
A.~Agrachev, D.~Barilari and U.~Boscain.
\newblock Introduction to Riemannian and sub-Riemannian geometry.
\newblock To appear.

\bibitem{al09}
A.~Agrachev and P.~Lee.
\newblock Optimal transportation under nonholonomic constraints.
\newblock  {\em Trans. Amer. Math. Soc.}, 361(11):6019--6047, 2009.

\bibitem{al12}
A.~Agrachev and P.~Lee.
\newblock Generalized Ricci curvature bounds for three dimensional contact subriemannian manifolds.
\newblock  Preprint, 2009.

\bibitem{arnold66}
V.~Arnold.
\newblock Sur la g\'eom\'etrie diff\'erentielle des groupes de Lie de dimension infinie et ses applications \`a l'hydrodynamique des fluides parfaits.
\newblock {\em Ann. Inst. Fourier (Grenoble)}, 16(1):319--361, 1966.

\bibitem{bellaiche96}
A.~Bella\"iche.
\newblock The tangent space in sub-Riemannian geometry.
\newblock In {\em Sub-Riemannian Geometry}, Birkh\"auser, 1--78, 1996.

\bibitem{cr08}
P.~Cannarsa and L.~Rifford.
\newblock Semiconcavity results for optimal control problems admitting
no singular minimizing controls.
\newblock {\em Ann. Inst. H. Poincar\'e Non Lin\'eaire}, 25(4):773--802, 2008.

\bibitem{cs04}
P.~Cannarsa and C.~Sinestrari.
\newblock {\em Semiconcave functions, Hamilton-Jacobi equations, and optimal control}.
\newblock Progress in Nonlinear Differential Equations and their Applications, 58. Birkh\"auser Boston Inc., Boston, MA, 2004.

\bibitem{cr10}
M.~Castelpietra and L.~Rifford.
\newblock Regularity properties of the distance function to conjugate and cut loci for viscosity solutions of Hamilton-Jacobi equations and applications in Riemannian geometry.
\newblock {\em ESAIM Control Optim. Calc. Var.}, 16(3):695--718, 2010.

\bibitem{fr10}
A.~Figalli and L.~Rifford.
\newblock Mass Transportation on sub-Riemannian Manifolds.
\newblock {\em Geom. Funct. Anal.}, 20(1):124--159, 2010.

\bibitem{ghl04}
S.~Gallot, D.~Hulin and J.~Lafontaine.
\newblock {\em Riemannian geometry}.
\newblock Third edition. Universitext. Springer-Verlag, Berlin, 2004.

\bibitem{gk95}
C.~Gol\'e and R.~Karidi.
\newblock A note on Carnot geodesics in nilpotent Lie groups.
\newblock  {\em J. Dynam. Control Systems}, 1(4):535--549, 1995.

\bibitem{gromov99}
M.~Gromov.
\newblock {\em Metric structures for Riemannian and non-Riemannian spaces.}
\newblock Progress in Mathematics, vol. 152. Birkh\"auser Boston Inc. Boston, MA, 1999.

\bibitem{it01}
J.~Itoh and M.~Tanaka.
\newblock The Lipschitz continuity of the distance function to the cut locus.
\newblock {\em Trans. Amer. Math. Soc.}, 353(1):21--40, 2001.

\bibitem{juillet09}
N.~Juillet.
\newblock Geometric inequalities and generalized Ricci bounds in the Heisenberg group.
\newblock {\em Int. Math. Res. Not. IMRN}, 13:2347--2373, 2009. 

\bibitem{juilletkyoto}
N.~Juillet.
\newblock On a method to disprove generalized Brunn-Minkowski inequalities.
\newblock In {\em Probabilistic approach to geometry}, 189--198, Adv. Stud. Pure. Math., 57, Math. Soc. Japan, Tokyo, 2010. 

\bibitem{ledonne10}
E.~Le Donne.
\newblock Lecture notes on sub-Riemannian geometry.
\newblock Preprint, 2010.
 
\bibitem{ln05}
Y.~Li and L.~Nirenberg.
\newblock The distance function to the boundary, Finsler geometry, and the singular set of
viscosity solutions of some Hamilton-Jacobi equations.
\newblock  {\em Comm. Pure Appl. Math.}, 58(1):85--146, 2005.

\bibitem{lv07}
J.~Lott and C.~Villani.
\newblock Weak curvature conditions and functional inequalities.
\newblock {\em J. Funct. Anal.}, 245(1):311--333, 2007. 

\bibitem{milnor76}
J.~Milnor.
\newblock Curvatures of left-invariant metrics on Lie groups.
\newblock {\em Advances in Math.}, 21(3):293--329, 1976. 

\bibitem{mitchell85}
J.~Mitchell.
\newblock On Carnot-Carath\'eodory spaces.
\newblock {\em J. Differential Geom.}, 21(9):35--45, 1985. 

\bibitem{montgomery02}
R.~Montgomery.
\newblock {\em A tour of sub-Riemannian geometries,
their geodesics and applications}.
\newblock {\em Mathematical Surveys and Monographs}, Vol.\ 91.
\newblock American Mathematical Society, Providence, RI, 2002.

\bibitem{ohta07}
S.~Ohta.
\newblock On the measure contraction property of metric measure spaces.
\newblock {\em Comment. Math. Helv.}, 82(4):805--828, 2007.

\bibitem{riffordbook}
L.~Rifford.
\newblock {\em Sub-Riemannian Geometry and Optimal Transport}.
\newblock Preprint, 2012.

\bibitem{sturm06a}
K.~-T.~Sturm.
\newblock On the geometry of metric measure spaces. I. 
\newblock {\em Acta Math.}, 196(1):65--131, 2006.

\bibitem{sturm06b}
K.~-T.~Sturm.
\newblock On the geometry of metric measure spaces. II. 
\newblock {\em Acta Math.}, 196(1):133--177, 2006.

\bibitem{villanibook} 
C.~Villani.
\newblock {\em Optimal transport, Old and New}.
\newblock Grundlehren der Mathematischen Wissenschaften, 338. Springer-Verlag, Berlin, 2009.


\end{thebibliography}
\end{document}